  \theoremstyle{plain}
    \newtheorem{thm}{Theorem}[section]
    \newtheorem{prop}[thm]{Proposition}
    \newtheorem{subsec}[thm]{}
\theoremstyle{definition}
    \newtheorem{defn}[thm]{Definition}
        \newtheorem{remark}[thm]{Remark}
    \newtheorem{exam}[thm]{Example}
    \newtheorem{notation}[thm]{Notation}
\theoremstyle{remark}
\title{}
\author{}
\date{}
\begin{document}
\title{Relative Rota-Baxter Leibniz algebras, their characterization and cohomology}

\author{Apurba Das}
\address{Department of Mathematics,
Indian Institute of Technology, Kharagpur 721302, West Bengal, India.}
\email{apurbadas348@gmail.com}



\subjclass[2010]{17A32, 17A36, 17B56, 16S80.}
\keywords{Leibniz algebras, (relative) Rota-Baxter Leibniz algebras, $L_\infty$-algebras, Cohomology, Deformations.}

\begin{abstract}
Recently, relative Rota-Baxter (Lie/associative) algebras are extensively studied in the literature from cohomological points of view. In this paper, we consider relative Rota-Baxter Leibniz algebras (rRB Leibniz algebras) as the object of our study. We construct an $L_\infty$-algebra that characterizes rRB Leibniz algebras as its Maurer-Cartan elements. Then we define representations of an rRB Leibniz algebra and introduce cohomology with coefficients in a representation. As applications of cohomology, we study deformations and abelian extensions of rRB Leibniz algebras. 
\end{abstract}

\maketitle



\noindent

\thispagestyle{empty}

\tableofcontents

\vspace{0.2cm}

\section{Introduction}



Rota-Baxter operators appeared in the work of Baxter \cite{baxter} in the fluctuation theory of probability. Such operators on associative algebras were subsequently studied by Rota \cite{rota}, Atkinson \cite{atkinson}, Miller \cite{miller}, Cartier \cite{cartier} among others. In the last twenty years, Rota-Baxter operators were used to study Yang-Baxter equations, infinitesimal bialgebras, dendriform algebras and double algebras \cite{aguiar-pre,bai-splitting,double-algebra}. Connes and Kreimer recently found important applications of Rota-Baxter operators in renormalization of quantum field theories \cite{conn}. In \cite{uchino} Uchino introduced generalized Rota-Baxter operators (mostly known as relative Rota-Baxter operators in the literature) on bimodules over an associative algebra. Rota-Baxter operators and relative Rota-Baxter operators in the context of Lie algebras were first introduced by Kupershmidt in the study of classical $r$-matrices \cite{kuper}. Recently, cohomology and deformation theory of relative Rota-Baxter operators in the context of Lie and associative algebras were developed in \cite{tang,das-rota}. A relative Rota-Baxter Lie (resp. associative) algebra is a triple consisting of a Lie (resp. associative) algebra, a representation and a relative Rota-Baxter operator. In \cite{laza-rota,DasSK} the authors construct a suitable $L_\infty$-algebra which characterize relative Rota-Baxter Lie (resp. associative) algebras as its Maurer-Cartan elements. Subsequently, representations and cohomology of relative Rota-Baxter Lie (resp. associative) algebras are explicitly studied in \cite{jiang-sheng,DasSK2}.

\medskip

Leibniz algebras are non-skewsymmetric analogs of Lie algebras. They first appeared in the work of Bloh \cite{bloh} and were rediscovered by Loday \cite{loday-une}. Leibniz algebras are extensively studied in the literature, see for instance \cite{loday-pira,bala-leib,loday-cup,branes,ayupov}. Recently, Rota-Baxter operators and relative Rota-Baxter operators in the context of Leibniz algebras, and their relations with Leibniz Yang-Baxter equation and Leibniz bialgebras are studied by Sheng and Tang \cite{sheng-tang}. Cohomology and deformation theory of relative Rota-Baxter operators in Leibniz algebras are also considered by Tang, Sheng and Zhou \cite{tang-leibniz}. A relative Rota-Baxter Leibniz algebra (rRB Leibniz algebra in short) is a triple consisting of a Leibniz algebra, a representation and a relative Rota-Baxter operator (see Definition \ref{rrbl}). An rRB Leibniz algebra gives rise to a Leibniz algebra structure on the representation space, which is called the induced Leibniz algebra. 

\medskip

Our aim in this paper is to extensively study rRB Leibniz algebras. At first, using Voronov's derived bracket \cite{voro}, we construct an $L_\infty$-algebra whose Maurer-Cartan elements are precisely rRB Leibniz algebras (cf. Theorem \ref{l-inf-mc}). Given an rRB Leibniz algebra, we also construct a new $L_\infty$-algebra (twisted by the Maurer-Cartan element corresponding to the rRB Leibniz algebra) that governs Maurer-Cartan deformations of the structure (cf. Theorem \ref{twisted-l-inf}). Next, we introduce representations of an rRB Leibniz algebra. In particular, we get representations of Rota-Baxter Leibniz algebras. Any rRB Leibniz algebra can be regarded as a representation of itself, called the adjoint representation. Given any representation, we can construct the dual representation (cf. Proposition \ref{prop-dual}). We show that a representation of an rRB Leibniz algebra gives rise to a representation of the induced Leibniz algebra (cf. Proposition \ref{vr-h}). This representation plays a key role in the description of the cohomology of the rRB Leibniz algebra. We also construct the semidirect product rRB Leibniz algebra from a given representation of a rRB Leibniz algebra (cf. Theorem \ref{thm-semi}). Next, we focus our attention on the cohomology theory of rRB Leibniz algebras. Using the Maurer-Cartan characterization of rRB Leibniz algebra (given in Theorem \ref{l-inf-mc}), we define the cohomology of an rRB Leibniz algebra with coefficients in the adjoint representation. Then we use this construction to define the cohomology with coefficients in any arbitrary representation.

\medskip

It is well-known that \cite{gers,weibel} cohomology of algebraic structure is closely related to deformations and abelian extensions. We show that our cohomology of rRB Leibniz algebras is suitable to study deformations and abelian extensions of rRB Leibniz algebras. We study $\mathsf{R}$-deformations of an rRB Leibniz algebra, where $\mathsf{R}$ is an augmented unital ring. In particular, we get formal deformations and infinitesimal deformations. Among others, we show that the set of equivalence classes of infinitesimal deformations of an rRB Leibniz algebra has a one-to-one correspondence with the second cohomology group of the rRB Leibniz algebra with coefficients in the adjoint representation (cf. Theorem \ref{inf-def-2}). We also define abelian extensions of an rRB Leibniz algebra and show that isomorphism classes of such abelian extensions are classified by the second cohomology group with coefficients in a representation (cf. Theorem \ref{abelian-ext-2}).

\medskip

The paper is organized as follows. In Section \ref{sec-2}, we recall some preliminaries on Leibniz algebras, relative Rota-Baxter operators and $L_\infty$-algebras. In Section \ref{sec-3}, we consider rRB Leibniz algebras and construct the $L_\infty$-algebra that characterizes rRB Leibniz algebras as Maurer-Cartan elements. Representations of rRB Leibniz algebras and some basic constructions are given in Section \ref{sec-4}. Cohomology of an rRB Leibniz algebra (with coefficients in adjoint representation and coefficients in arbitrary representation) is given in Section \ref{sec-5}. Finally, in Section \ref{sec-6}, we study deformations and abelian extensions of rRB Leibniz algebras as applications of our cohomology.

\medskip

All vector spaces, linear maps and tensor products are over a field ${\bf k}$ of characteristic $0$. A permutation $\sigma \in S_n$ is said to be a $(i,j)$-shuffle with $i+j=n$ if $\sigma (1) < \cdots < \sigma (i)$ and $\sigma(i+1) < \cdots < \sigma (i+j)$.
We denote the set of all $(i,j)$-shuffles by $Sh (i, j)$.


 
\section{Preliminaries}\label{sec-2}
In this section, we recall some basic definitions and results on Leibniz algebras, relative Rota-Baxter operators and $L_\infty$-algebras. Our main references are \cite{loday-pira,bala,sheng-tang,lada-markl,laza-rota,voro}.

\begin{defn}
A {\bf Leibniz algebra} is a vector space $\mathfrak{g}$ together with a linear operation (called the bracket) $[~,~]_\mathfrak{g} : \mathfrak{g} \otimes \mathfrak{g} \rightarrow \mathfrak{g}$ satisfying the following Leibniz identity
\begin{align}\label{leib-iden}
[x,[y,z]_\mathfrak{g}]_\mathfrak{g} = [[x,y]_\mathfrak{g}, z ]_\mathfrak{g} + [y, [x, z]_\mathfrak{g} ]_\mathfrak{g}, ~\text{ for } x, y, z \in \mathfrak{g}.
\end{align}
A Leibniz algebra may be simply denoted by the underlying vector space  $\mathfrak{g}$ when the bracket is clear from the context.
\end{defn}

\begin{remark}
The Leibniz algebra considered in the above definition is called a left Leibniz algebra as the identity (\ref{leib-iden}) is equivalent to the fact that the maps $[x, -]_\mathfrak{g} : \mathfrak{g} \rightarrow \mathfrak{g}$ by fixing left coordinate are derivations for the bracket on $\mathfrak{g}$. Therefore, one can also define right Leibniz algebras. If $[~,~]_\mathfrak{g} : \mathfrak{g} \otimes \mathfrak{g} \rightarrow \mathfrak{g}$ is a left Leibniz bracket on a vector space $\mathfrak{g}$, then it is easy to verify that $[x,y]_\mathfrak{g}^\mathrm{op} : = [y, x]_\mathfrak{g}$ is a right Leibniz bracket on $\mathfrak{g}$ and vice-versa. It is important to remark that the results of the present paper can be adapted to right Leibniz algebras by suitable modifications.
\end{remark}

\begin{defn}
Let $\mathfrak{g}$ be a Leibniz algebra. A {\bf representation} of $\mathfrak{g}$ is a vector space $V$ together with linear maps (called the left and right $\mathfrak{g}$-actions, respectively) $l_V : \mathfrak{g} \otimes V \rightarrow V$ and $r_V : V \otimes \mathfrak{g} \rightarrow \mathfrak{g}$ satisfying for $x, y \in \mathfrak{g}$, $v \in V$,
\begin{align*}
l_V (x, l_V (y, v)) =~& l_V ([x, y]_\mathfrak{g}, v) + l_V (y, l_V (x, v)),\\
l_V (x, r_V (v, y)) =~& r_V (l_V(x, v), y) + r_V (v, [x, y]_\mathfrak{g}),\\
r_V (v, [x, y]_\mathfrak{g}) =~& r_V ( r_V (v, x), y) + l_V (x, r_V (v, y)).
\end{align*}
\end{defn}

\begin{exam}
Any Leibniz algebra $\mathfrak{g}$ is a representation of itself with both left and right actions $l_\mathfrak{g}, r_\mathfrak{g} : \mathfrak{g} \otimes \mathfrak{g} \rightarrow \mathfrak{g}$ are given by the Leibniz bracket on $\mathfrak{g}$. This is called the adjoint representation.
\end{exam}

Let $\mathfrak{g}$ be a Leibniz algebra and $V$ be a representation of it. For each $n \geq 0$, we define an abelian group $C^n (\mathfrak{g}, V) := \mathrm{Hom}(\mathfrak{g}^{\otimes n}, V)$ and a map $\delta_{\mathfrak{g}, V} : C^n (\mathfrak{g}, V) \rightarrow C^{n+1} (\mathfrak{g}, V)$ by
\begin{align*}
(\delta_{\mathfrak{g}, V} (f)) (x_1, \ldots, x_{n+1}) =~& \sum_{i=1}^n (-1)^{i+1} ~ l_V (x_i, f (x_1, \ldots, \widehat{x_i}, \ldots, x_{n+1})) + (-1)^{n+1} ~r_V (f(x_1, \ldots, x_n), x_{n+1}) \\
~&+ \sum_{1 \leq i < j \leq n+1} (-1)^i ~ f (x_1, \ldots, \widehat{x_i}, \ldots, x_{j-1}, [x_i, x_j]_\mathfrak{g}, x_{j+1}, \ldots, x_{n+1}),
\end{align*}
for $f \in C^n(\mathfrak{g}, V)$ and $x_1, \ldots, x_{n+1} \in \mathfrak{g}$. Then $\{ C^\bullet (\mathfrak{g}, V), \delta_{\mathfrak{g},V} \}$ is a cochain complex \cite{loday-pira}. The corresponding cohomology groups are called the cohomology of $\mathfrak{g}$ with coefficients in the representation $V$.

\medskip

In \cite{bala} Balavoine introduced a graded Lie algebra associated to a vector space which plays an important role in the study of Leibniz algebras. Let $\mathfrak{g}$ be a vector space (not necessarily a Leibniz algebra). Consider the shifted graded vector space $C^{\bullet + 1} (\mathfrak{g}, \mathfrak{g}) := \oplus_{ n \geq 0} C^{n+1} (\mathfrak{g}, \mathfrak{g})$. Then the {\bf Balavoine bracket}
$[~,~]_\mathsf{B} : C^{\bullet + 1} (\mathfrak{g}, \mathfrak{g}) \otimes C^{\bullet + 1} (\mathfrak{g}, \mathfrak{g}) \rightarrow C^{\bullet + 1} (\mathfrak{g}, \mathfrak{g})$ is a degree $0$ bracket
given by
\begin{align*}
&[f, g]_\mathsf{B} (x_1, \ldots, x_{m+n+1}) \\ &:= \sum_{i=1}^{m+1} (-1)^{(i-1)n} \sum_{\sigma \in Sh (i-1, n)} (-1)^\sigma ~ f (x_{\sigma (1)}, \ldots, x_{\sigma (i-1)}, g (x_{\sigma (i)}, \ldots, x_{\sigma (i+n-1)}, x_{i+n}), \ldots, x_{m+n+1}) \\
&- (-1)^{mn} \sum_{i=1}^{n+1} (-1)^{(i-1)m} \sum_{\sigma \in Sh (i-1, m)} (-1)^\sigma ~ g (x_{\sigma (1)}, \ldots, x_{\sigma (i-1)}, f (x_{\sigma (i)}, \ldots, x_{\sigma (i+m-1)}, x_{i+m}), \ldots, x_{m+n+1}),
\end{align*}
for $f \in C^{m+1} (\mathfrak{g}, \mathfrak{g})$, $g \in C^{n+1} (\mathfrak{g}, \mathfrak{g})$ and $x_1, \ldots, x_{m+n+1} \in \mathfrak{g}$. Then $(C^{\bullet +1 } (\mathfrak{g}, \mathfrak{g}), [~,~]_\mathsf{B})$ is a graded Lie algebra. The importance of this graded Lie algebra is given by the following result.

\begin{prop}
There is a one-to-one correspondence between Leibniz algebra structures on a vector space $\mathfrak{g}$ and Maurer-Cartan elements in the graded Lie algebra $(C^{\bullet +1 } (\mathfrak{g}, \mathfrak{g}), [~,~]_\mathsf{B})$.
\end{prop}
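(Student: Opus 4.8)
The plan is to identify, for a fixed vector space $\mathfrak{g}$, the degree $1$ elements of the shifted graded Lie algebra $(C^{\bullet+1}(\mathfrak{g},\mathfrak{g}),[~,~]_\mathsf{B})$ with bilinear operations on $\mathfrak{g}$, and then to show that the Maurer-Cartan equation for such an element is precisely the Leibniz identity (\ref{leib-iden}). A degree $1$ element of $C^{\bullet+1}(\mathfrak{g},\mathfrak{g})$ lies in $C^{1+1}(\mathfrak{g},\mathfrak{g})=C^2(\mathfrak{g},\mathfrak{g})=\mathrm{Hom}(\mathfrak{g}^{\otimes 2},\mathfrak{g})$, hence is nothing but a bracket $\mu:\mathfrak{g}\otimes\mathfrak{g}\to\mathfrak{g}$, and conversely every such bracket arises this way. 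Since this graded Lie algebra carries no differential, its Maurer-Cartan elements are exactly the degree $1$ elements $\mu$ satisfying $[\mu,\mu]_\mathsf{B}=0$.

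The heart of the argument is an explicit evaluation of $[\mu,\mu]_\mathsf{B}$. Here $f=g=\mu$ with $m=n=1$, so $[\mu,\mu]_\mathsf{B}\in C^3(\mathfrak{g},\mathfrak{g})$, and I would evaluate it on three arguments $x_1,x_2,x_3$. In the defining formula the first sum runs over $i=1,2$: the term $i=1$ uses the single shuffle in $Sh(0,1)$ and contributes $\mu(\mu(x_1,x_2),x_3)$, while the term $i=2$ uses the two shuffles in $Sh(1,1)$ and, carrying the sign $(-1)^{(i-1)n}=-1$, contributes $-\mu(x_1,\mu(x_2,x_3))+\mu(x_2,\mu(x_1,x_3))$. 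Because $f=g$ and $m=n$, the second sum has exactly the same value as the first, and its prefactor $-(-1)^{mn}=-(-1)^{1}=+1$ causes it to be added rather than subtracted.

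Collecting the terms I expect to obtain
\begin{align*}
[\mu,\mu]_\mathsf{B}(x_1,x_2,x_3)=2\big(\mu(\mu(x_1,x_2),x_3)-\mu(x_1,\mu(x_2,x_3))+\mu(x_2,\mu(x_1,x_3))\big).
\end{align*}
Writing $\mu(x,y)=[x,y]_\mathfrak{g}$, the vanishing of this expression is equivalent to
$[x_1,[x_2,x_3]_\mathfrak{g}]_\mathfrak{g}=[[x_1,x_2]_\mathfrak{g},x_3]_\mathfrak{g}+[x_2,[x_1,x_3]_\mathfrak{g}]_\mathfrak{g}$, which is exactly the Leibniz identity (\ref{leib-iden}). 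Hence $\mu$ is a Maurer-Cartan element if and only if $(\mathfrak{g},\mu)$ is a Leibniz algebra, and this equivalence, together with the degree $1$ identification above, establishes the claimed bijection.

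The only genuine obstacle is the sign bookkeeping: one must correctly enumerate the shuffles in $Sh(0,1)$ and $Sh(1,1)$, track the factors $(-1)^{(i-1)n}$ and $(-1)^\sigma$ inside each sum together with the overall $-(-1)^{mn}$, and in particular verify that the two halves of the Balavoine bracket combine rather than cancel for the odd-degree element $\mu$. Once these signs are pinned down, the identification of $\tfrac{1}{2}[\mu,\mu]_\mathsf{B}=0$ with (\ref{leib-iden}) is immediate.
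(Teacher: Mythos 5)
Your proof is correct and is essentially the argument the paper relies on: the paper itself recalls this proposition without proof, but your evaluation of the bracket, namely $[\mu,\mu]_\mathsf{B}(x_1,x_2,x_3)=2\big(\mu(\mu(x_1,x_2),x_3)-\mu(x_1,\mu(x_2,x_3))+\mu(x_2,\mu(x_1,x_3))\big)$, is exactly the identity used (for $\pi=\mu_\mathfrak{g}+l_V+r_V$ on $\mathfrak{g}\oplus V$) in the paper's proof of Theorem \ref{thm-lr-both}, restricted to the component $\mathrm{Hom}(\mathfrak{g}^{\otimes 2},\mathfrak{g})$. Your shuffle enumeration, the sign $(-1)^{(i-1)n}$, and the observation that the prefactor $-(-1)^{mn}=+1$ makes the two halves add rather than cancel are all correct, so the Maurer-Cartan equation $[\mu,\mu]_\mathsf{B}=0$ is precisely the Leibniz identity (\ref{leib-iden}).
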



\medskip

Next, we recall relative Rota-Baxter operators in the context of Leibniz algebras.

\begin{defn} (i) Let $\mathfrak{g}$ be a Leibniz algebra. A linear map $R : \mathfrak{g} \rightarrow \mathfrak{g}$ is said to be a {\bf Rota-Baxter operator} on $\mathfrak{g}$ if 
\begin{align}\label{rb-ident}
[R(x), R(y)]_\mathfrak{g} = R ([R(x), y]_\mathfrak{g} + [x, R(y)]_\mathfrak{g}), \text{ for } x, y \in \mathfrak{g}.
\end{align}

(ii) Let $\mathfrak{g}$ be a Leibniz algebra and $V$ be a representation of it. A linear map $R : V \rightarrow \mathfrak{g}$ is said to be a {\bf relative Rota-Baxter operator} (on $V$ over the Leibniz algebra $\mathfrak{g}$) if
\begin{align}
[R(v), R(v')]_\mathfrak{g} = R (l_V (R(v), v') + r_V (v, R(v') ) ), \text{ for } v, v' \in V.
\end{align}
It follows that a Rota-Baxter operator on a Leibniz algebra $\mathfrak{g}$ is a relative Rota-Baxter operator on the adjoint representation space $\mathfrak{g}$.
\end{defn}


\medskip

Next, we collect some necessary background on $L_\infty$-algebras due to Lada and Stasheff \cite{ls}. For our purpose, we follow the equivalent definition by a degree shift \cite{voro}. 

\begin{defn}\label{l-inf-def}
An $L_\infty$-algebra is a pair $(\mathfrak{L}, \{ l_k \}_{k \geq 1})$ consisting of a graded vector space $\mathfrak{L} = \oplus_{i \in \mathbb{Z}} \mathfrak{L}_i$ with a collection $\{ l_k \}_{k \geq 1}$ of degree $1$ linear maps (called the structure maps) $l_k : \mathfrak{L}^{\otimes k} \rightarrow \mathfrak{L}$, for $k \geq 1$, satisfying
\begin{itemize}
\item (graded symmetry):~ $l_k (x_{\sigma (1)}, \ldots, x_{\sigma (k)}) = \epsilon (\sigma) l_k (x_1, \ldots, x_k),$ for $k \geq 1$ and $\sigma \in S_k$,
\item (shifted higher Jacobi identity): for any $n \geq 1$ and homogeneous elements $x_1, \ldots, x_n \in \mathfrak{L}$,
\begin{align*}
\sum_{i+j = n+1} \sum_{\sigma \in Sh (i, n-i)} \epsilon (\sigma)~ l_j \big(   l_i (x_{\sigma (1)}, \ldots, x_{\sigma (i)}), x_{\sigma (i+1)}, \ldots, x_{\sigma (n)} \big) = 0.
\end{align*}
\end{itemize}
\end{defn}


\begin{defn}\label{defn-mc}\cite{getzler,laza-rota} (i) An $L_\infty$-algebra $(\mathfrak{L}, \{ l_k \}_{k \geq 1})$ is said to be weakly filtered if there exists a descending filtration $\mathcal{F}_\bullet \mathfrak{L}$ of $\mathfrak{L}$ (that is, $\mathfrak{L} = \mathcal{F}_1 \mathfrak{L} \supset \cdots \supset \mathcal{F}_k \mathfrak{L} \supset \cdots$) and a natural number $N$ (called an index) such that $\mathfrak{L} \cong \lim_{n \rightarrow \infty} \mathfrak{L} / \mathcal{F}_n \mathfrak{L}$ and $l_k (\mathfrak{L}, \ldots, \mathfrak{L}) \subset \mathcal{F}_k \mathfrak{L}$, for all $k \geq N$.

\medskip

(ii) Let $(\mathfrak{L}, \{ l_k \}_{k \geq 1})$ be a weakly filtered $L_\infty$-algebra. An element $\theta \in \mathfrak{L}_0$ is said to be a Maurer-Cartan element if $\theta $ satisfies
\begin{align*}
\sum_{k =1}^\infty \frac{1}{k!} l_k (\theta, \ldots, \theta) = 0.
\end{align*}
\end{defn}



\begin{defn}
A $V$-data is a quadruple $(L, \mathfrak{a}, P, \triangle)$ consisting of a graded Lie algebra $L$ (with the bracket $[~,~]$), an abelian graded Lie subalgebra $\mathfrak{a} \subset L$, a projection map $P : L \rightarrow L$ with $\mathrm{im} (P) = \mathfrak{a}$ and $\mathrm{ker}(P) \subset L$ a graded Lie subalgebra, and $\triangle \in \mathrm{ker}(P)_1$ satisfying $[\triangle, \triangle] = 0$.
\end{defn}

A $V$-data induce some $L_\infty$-algebras via Voronov's higher derived brackets \cite{voro}. We will use the following result in the next section to construct an $L_\infty$-algebra that characterizes relative Rota-Baxter Leibniz algebras as Maurer-Cartan elements.

\begin{thm}\label{v-thm}
Let $(L, \mathfrak{a}, P, \triangle)$ be a $V$-data. Then we have the following.

(i) The graded vector space $\mathfrak{a}$ can be equipped an $L_\infty$-algebra with the structure maps
\begin{align*}
l_k (a_1, \ldots, a_k ) = P [ \cdots [[\triangle, a_1], a_2], \ldots, a_k ], ~ \text{ for } k \geq 1.
\end{align*}

\medskip

(ii) If $L' \subset L$ is a graded Lie subalgebra so that $[\triangle, L'] \subset L'$, then the graded vector space $L'[1] \oplus \mathfrak{a}$ carries an $L_\infty$-algebra structure, where $(L'[1] \oplus \mathfrak{a})_i = (L'[1])_i \oplus \mathfrak{a}_i = L'_{i+1} \oplus \mathfrak{a}_i$ for all $i \in \mathbb{Z}$, and the structure maps are given by
\begin{align*}
l_1 (x[1]) =~& - [\triangle, x][1] +  P(x),\\
l_1 (a) =~& P [\triangle, a], \\
l_2 (x[1], y[1]) =~& (-1)^{|x|} [x, y][1],\\
l_k (x[1], a_1, \ldots, a_{k-1}) =~& P [\cdots [[x, a_1], a_2], \ldots, a_{k-1} ],~ k \geq 2,\\
l_k (a_1, \ldots, a_{k} ) =~& P [\cdots [[ \triangle, a_1], a_2], \ldots, a_k], ~ k \geq 2,
\end{align*}
for homogeneous elements $x, y \in L'$ (considered as elements $x[1], y[1] \in L'[1]$ with a degree shift) and homogeneous $a, a_1, \ldots, a_k \in \mathfrak{a}$. Up to the permutations of the above entries, all the other linear maps vanish. Moreover, $\mathfrak{a} \subset L'[1] \oplus \mathfrak{a}$ is an $L_\infty$-subalgebra.
\end{thm}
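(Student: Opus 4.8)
The plan is to verify the two defining $L_\infty$-axioms — graded symmetry and the shifted higher Jacobi identity — directly from the graded Lie structure of $L$, treating part (i) as the core case and obtaining part (ii) by an analogous but bookkeeping-heavier computation that reproduces (i) on the $\mathfrak a$-summand. Throughout I would set $D := [\triangle, -] : L \to L$, a degree $1$ operator which is a differential because $D^2 = \tfrac12\,[[\triangle,\triangle], -] = 0$ by the $V$-data hypothesis $[\triangle,\triangle]=0$; and I would use repeatedly that $\mathrm{im}(P)=\mathfrak a$ is abelian, that $\ker P$ is a graded Lie subalgebra containing $\triangle$, and that $P$ is a projection ($P^2=P$ and $P|_{\mathfrak a}=\mathrm{id}$).

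For part (i), graded symmetry reduces to a single transposition: writing $X=[\cdots[[\triangle,a_1],\ldots],a_{i-1}]$, the graded Jacobi identity gives $[[X,a_i],a_{i+1}] = [X,[a_i,a_{i+1}]] \pm [[X,a_{i+1}],a_i]$, and the first term vanishes since $\mathfrak a$ is abelian, leaving exactly the Koszul sign $(-1)^{|a_i||a_{i+1}|}$; iterating over transpositions yields the general $\epsilon(\sigma)$. For the higher Jacobi identity on inputs $a_1,\dots,a_n\in\mathfrak a$, I would expand each summand $l_j(l_i(\cdots),\cdots)$ and apply graded Jacobi to unnest the inner derived bracket. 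The abelian-ness of $\mathfrak a$ kills all terms containing an inner $[a_p,a_q]$, and the closure of $\ker P$ under the bracket (together with $\triangle\in\ker P$) ensures that the surviving cross-terms either cancel in pairs or are annihilated by the outer $P$; what remains collapses to $\tfrac12\,P[\cdots[[[\triangle,\triangle],a_1],\ldots],a_n]$, which is zero. This is Voronov's higher-derived-bracket identity, and it is the computational heart of the theorem.

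For part (ii), I would first note that the formulas are well defined: since $[\triangle, L']\subseteq L'$, each $[\triangle,x]$ and its iterates lie in $L'$, so the shifted terms $[\triangle,x][1]\in L'[1]$ make sense. I would then verify the relations by splitting the homogeneous inputs according to type — all in $\mathfrak a$; exactly one of the form $x[1]$ with the rest in $\mathfrak a$; and two of the form $x[1]$ — since no bracket is defined on three or more shifted inputs. The all-$\mathfrak a$ relations are literally those of part (i), which simultaneously proves the final assertion that $\mathfrak a\hookrightarrow L'[1]\oplus\mathfrak a$ is an $L_\infty$-subalgebra. As a representative of the mixed checks, the relation $l_1\circ l_1=0$ on $x[1]$ reads
\begin{align*}
l_1(l_1(x[1])) &= l_1\big(-[\triangle,x][1]\big)+l_1\big(P(x)\big)\\
&= [\triangle,[\triangle,x]][1]-P[\triangle,x]+P[\triangle,P(x)]\\
&= -\,P\big[\triangle,\;x-P(x)\big],
\end{align*}
where the first term vanishes by $D^2=0$, and the last line vanishes because $x-P(x)\in\ker P$ and $\triangle\in\ker P$ with $\ker P$ a subalgebra; this single computation already exhibits how all four hypotheses conspire. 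The remaining relations — compatibility of $l_1$ with $l_2$, and the higher relations mixing one $x[1]$ with several $a_i$ — follow the same pattern, each reducing via graded Jacobi to an expression the outer $P$ annihilates or which cancels against its shuffle partners.

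I expect the main obstacle to be organizational rather than conceptual: controlling the Koszul and degree-shift signs in the shuffle sums of the higher Jacobi identity, especially in the mixed part-(ii) relations, where the shift $[1]$ introduces an extra grading that must be tracked through every application of graded antisymmetry and Jacobi. A cleaner route that avoids most of this bookkeeping is to deduce part (ii) from part (i) by applying the higher-derived-bracket construction of (i) to an auxiliary $V$-data built on a graded Lie algebra assembled from $L$ and a shifted copy of $L'$; establishing that this auxiliary $V$-data reproduces exactly the stated structure maps is then the one step requiring care, but it localizes all the sign analysis into a single identification.
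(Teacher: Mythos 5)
First, a point of reference: the paper contains \emph{no proof} of Theorem \ref{v-thm}. It is recalled from Voronov \cite{voro} (it is the higher-derived-brackets theorem, quoted as a black box and then applied in Section \ref{sec-3}), so there is no in-paper argument to measure your attempt against; the only meaningful comparison is with the standard proof in the literature, and that is exactly what your plan reconstructs. In that sense your approach is not a different route --- it is \emph{the} route, carried out where the paper simply cites.

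Judged as a proof rather than a plan, the parts you execute are correct: the graded-symmetry argument for part (i) (graded Jacobi plus abelianness of $\mathfrak{a}$ leaves precisely the Koszul sign, and adjacent transpositions generate $S_k$); the observation that the part (ii) brackets restricted to $\mathfrak{a}$ are literally the part (i) brackets, which disposes of the final $L_\infty$-subalgebra claim; and the computation $l_1(l_1(x[1]))=0$, which correctly deploys $[\triangle,\triangle]=0$, $P^2=P$, and the fact that $\ker(P)$ is a subalgebra containing $\triangle$. The gap is the one you half-acknowledge: the shuffle-sum verification that the $n$-th Jacobiator of the derived brackets equals $P[\cdots[[\tfrac{1}{2}[\triangle,\triangle],a_1],a_2],\ldots,a_n]$ is asserted, not proven. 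This identity is Voronov's Theorem 1 and its proof is not a routine cancellation: one must rewrite the inner $P$ as $\mathrm{id}-(\mathrm{id}-P)$ inside the nested brackets, use $[\ker P,\ker P]\subset\ker P$ together with $\triangle\in\ker P$, and run an induction on the number of arguments; your phrase about cross-terms that \emph{cancel in pairs or are annihilated by the outer $P$} is the conclusion of that induction, not an argument for it. The same remark applies to the part (ii) identities involving two shifted inputs together with several $\mathfrak{a}$-inputs, which your case enumeration names but never treats. Your proposed shortcut for part (ii) --- apply part (i) to an auxiliary $V$-data on a graded Lie algebra assembled from $L$ and a shifted copy of $L'$ --- is sound and is in fact how Voronov obtains such two-component statements (by adjoining the relevant derivation and invoking the one-component theorem); if you want a complete write-up, that is the route I would recommend, since it confines all remaining work to identifying the structure maps produced by the auxiliary $V$-data with the ones displayed in the theorem.
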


\section{rRB Leibniz algebras and their Maurer-Cartan characterizations}\label{sec-3}
In this section, we first define relative Rota-Baxter Leibniz algebras (rRB Leibniz algebras) and then construct an $L_\infty$-algebra whose Maurer-Cartan elements are rRB Leibniz algebras. We will use this characterization in Section \ref{sec-5} to define the cohomology of rRB Leibniz algebras.

\begin{defn}\label{rrbl} A rRB Leibniz algebra is a triple $(\mathfrak{g}, V, R)$ consisting of a Leibniz algebra $\mathfrak{g}$, a representation $V$ of the Leibniz algebra $\mathfrak{g}$ and a relative Rota-Baxter operator $R : V \rightarrow \mathfrak{g}$.
\end{defn}

\begin{notation}
For notational convenience, we denote a rRB Leibniz algebra by $V \xrightarrow{ R} \mathfrak{g}$ instead of the triple $(\mathfrak{g}, V, R)$. However, both of them capture the same informations.
\end{notation}

\begin{remark}\label{rmk-rb}
A Rota-Baxter Leibniz algebra (RB Leibniz algebra) is a pair $(\mathfrak{g}, R)$ that consists of a Leibniz algebra $\mathfrak{g}$ and a Rota-Baxter operator $R$ on it. Thus, a RB Leibniz algebra $(\mathfrak{g},R)$ can be considered as a rRB Leibniz algebra $\mathfrak{g} \xrightarrow{R} \mathfrak{g}$, where the domain of $R$ is the adjoint representation space $\mathfrak{g}$.
\end{remark}

\begin{defn}\label{rrbl-mor}
Let $V \xrightarrow{R} \mathfrak{g}$ and $V' \xrightarrow{R'} \mathfrak{g}'$ be two rRB Leibniz algebras. A morphism between them is a pair $(\phi, \psi)$ of a Leibniz algebra morphism $\phi : \mathfrak{g} \rightarrow \mathfrak{g}'$ and a linear map $\psi : V \rightarrow V'$ satisfying the following
\begin{align*}
\psi (l_V (x, v)) = l_{V'}^{\mathfrak{g}'} (\phi (x), \psi (v)), \quad \psi (r_V (v, x)) = r_{V'}^{\mathfrak{g}'} (\psi (v), \phi (x)) ~~~~ \text{ and } ~~~~ \phi \circ R = R' \circ \psi, ~ \text{ for } x \in \mathfrak{g}, v \in V.
\end{align*}
Here $l_{V'}^{\mathfrak{g}'}$ and $r_{V'}^{\mathfrak{g}'}$ denote the left and right $\mathfrak{g}'$-actions on $V'$, respectively. We write a morphism as above simply by the notation $(\phi, \psi) : (V \xrightarrow{R} \mathfrak{g}) \rightsquigarrow (V' \xrightarrow{R'} \mathfrak{g}').$
\end{defn}

\begin{prop}
Let $V \xrightarrow{R} \mathfrak{g}$ be a rRB Leibniz algebra. Then the vector space $V$ carries a Leibniz algebra structure with the bracket given by
\begin{align*}
[v, v']_R := l_V (R(v), v') + r_V (v, R(v')), ~ \text{ for } v, v' \in V.
\end{align*}
\end{prop}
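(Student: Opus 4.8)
The plan is to verify directly the left Leibniz identity
\begin{align*}
[x,[y,z]_R]_R = [[x,y]_R, z]_R + [y,[x,z]_R]_R
\end{align*}
for all $x,y,z \in V$, where by definition $[v,v']_R = l_V(R(v),v') + r_V(v,R(v'))$. The single structural fact driving the computation is that the relative Rota-Baxter condition can be rewritten as $R([v,v']_R) = [R(v),R(v')]_\mathfrak{g}$; that is, the operator $R$ intertwines the new bracket $[~,~]_R$ on $V$ with the Leibniz bracket on $\mathfrak{g}$. This lets me replace every occurrence of $R$ applied to an $[~,~]_R$-bracket by the corresponding $\mathfrak{g}$-bracket of $R$-images, which is what makes the identity reduce to the representation axioms.

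First I would expand the left-hand side. Substituting the definition of $[~,~]_R$ twice and using $R([y,z]_R) = [R(y),R(z)]_\mathfrak{g}$, the left-hand side becomes the sum of the three terms
\begin{align*}
l_V(R(x), l_V(R(y),z)), \qquad l_V(R(x), r_V(y,R(z))), \qquad r_V(x, [R(y),R(z)]_\mathfrak{g}).
\end{align*}
Expanding the two right-hand terms in the same way, again invoking the intertwining property to rewrite $R([x,y]_R)$ and $R([x,z]_R)$ as $\mathfrak{g}$-brackets, produces six summands in total.

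The heart of the matter is that each of the three left-hand terms splits, via exactly one of the three defining axioms of a representation of a Leibniz algebra, into two summands appearing on the right. Concretely, the first representation axiom (applied with left coordinates $R(x), R(y)$) rewrites the first term as $l_V([R(x),R(y)]_\mathfrak{g}, z) + l_V(R(y), l_V(R(x),z))$; the second axiom rewrites the second term as $r_V(l_V(R(x),y), R(z)) + r_V(y, [R(x),R(z)]_\mathfrak{g})$; and the third axiom rewrites the third term as $r_V(r_V(x,R(y)), R(z)) + l_V(R(y), r_V(x,R(z)))$. Collecting these six summands, one checks that they coincide term-by-term with the expansion of the right-hand side, so the two sides agree.

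There is no genuine obstacle here; the computation is mechanical once the intertwining property is in place. The only point requiring care is the bookkeeping, namely matching each expanded left term with the correct representation axiom and confirming that the resulting six summands agree exactly with those on the right. It is worth noting that the Leibniz identity of $\mathfrak{g}$ is not invoked directly: it is already encoded in the compatibility axioms of the representation $V$, while the relative Rota-Baxter condition supplies precisely the intertwining needed to close the argument.
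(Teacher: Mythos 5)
Your proof is correct. The paper states this proposition without proof (it is the standard induced-Leibniz-algebra result, going back to Sheng--Tang), and your argument is exactly the expected one: rewrite the relative Rota--Baxter identity as the intertwining property $R([v,v']_R) = [R(v),R(v')]_\mathfrak{g}$, expand both sides of the Leibniz identity for $[~,~]_R$ into three terms apiece on the left and six on the right, and match them using precisely the three representation axioms; your term-by-term splittings $l_V(R(x), l_V(R(y),z)) = l_V([R(x),R(y)]_\mathfrak{g},z) + l_V(R(y), l_V(R(x),z))$, etc., all check out, and your observation that the Leibniz identity of $\mathfrak{g}$ enters only through the representation axioms is accurate.
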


We denote this Leibniz algebra simply by $V_R$, and called the Leibniz algebra induced by the rRB Leibniz algebra $V \xrightarrow{R} \mathfrak{g}$.
\medskip

\noindent {\bf Maurer-Cartan characterizations of rRB Leibniz algebras.} Here we construct an $L_\infty$-algebra that characterizes rRB Leibniz algebras as Maurer-Cartan elements. We first recall some notations from \cite{sheng-tang,DasSK}.

Let $\mathfrak{g}$ and $V$ be two vector spaces. For any linear map $f : \mathfrak{g}_{i_1} \otimes \cdots \otimes \mathfrak{g}_{i_n} \rightarrow \mathfrak{g}_j$ where $\mathfrak{g}_{i_1}, \ldots, \mathfrak{g}_{i_n} , \mathfrak{g}_j$ are either $\mathfrak{g}$ or $V$, we define a new linear map (called the horizontal lift) $\widehat{f} \in \mathrm{Hom} ((\mathfrak{g} \oplus V)^{\otimes n}, \mathfrak{g} \oplus V)$ by
\begin{align}\label{h-lift}
\widehat{f} := \begin{cases}
f  & \text{ on } \mathfrak{g}_{i_1} \otimes \cdots \otimes \mathfrak{g}_{i_n}\\
0 & \text{ on other entries.}
\end{cases}
\end{align}
Let $\mathfrak{g}^{k,l}$ be the direct sum of all possible $(k+l)$-tensor powers of $\mathfrak{g}$ and $V$ in which $\mathfrak{g}$ appears $k$-times (hence $V$ appears $l$ times). For instance,
\begin{align*}
\mathfrak{g}^{2,0} = \mathfrak{g} \otimes \mathfrak{g}, \quad \mathfrak{g}^{1,1} = (\mathfrak{g} \otimes V) \oplus (V \otimes \mathfrak{g}) ~~~ \text{ and } ~~~ \mathfrak{g}^{0,2} = V \otimes V.
\end{align*}
With this notation, we have $(\mathfrak{g} \oplus V)^{\otimes n} \cong \oplus_{k+l = n} \mathfrak{g}^{k,l}$. Moreover, there is an isomorphism (by the horizontal lift) 
\begin{align*}
C^n ( \mathfrak{g} \oplus V, \mathfrak{g} \oplus V) = \mathrm{Hom}((\mathfrak{g} \oplus V)^{\otimes n}, \mathfrak{g} \oplus V) \cong \big( \oplus_{k+l = n} \mathrm{Hom}(\mathfrak{g}^{k,l}, \mathfrak{g}) \big) \oplus \big( \oplus_{k+l = n} \mathrm{Hom}(\mathfrak{g}^{k,l}, V) \big).
\end{align*}

\begin{defn}
A linear map $f \in \mathrm{Hom} ((\mathfrak{g} \oplus V)^{\otimes n + 1}, \mathfrak{g} \oplus V)$ is said to be homogeneous of bidegree $k | l$ with $-1 \leq k, l \leq n+1$ and $k+l = n$ if
\begin{align*}
f (\mathfrak{g}^{k+1, l}) \subset \mathfrak{g}, \quad f (\mathfrak{g}^{k, l+1}) \subset V ~~~ \text{ and } ~~~ f (X) = 0 \text{ in all other cases}.
\end{align*}
We denote the set of all homogeneous linear maps of bidegree $k|l$ by $C^{k|l} (\mathfrak{g} \oplus V, \mathfrak{g} \oplus V)$.
\end{defn}

\begin{remark}
It is easy to see that there are isomorphisms (again by the horizontal lift)
\begin{align*}
C^{k | 0} (\mathfrak{g} \oplus V, \mathfrak{g} \oplus V) \cong \mathrm{Hom} (\mathfrak{g}^{\otimes k+1}, \mathfrak{g}) \oplus \mathrm{Hom} (\mathfrak{g}^{k,1}, V) ~~~ \text{ ~~ and ~~ } ~~~
C^{-1|l} (\mathfrak{g} \oplus V, \mathfrak{g} \oplus V) \cong \mathrm{Hom} (V^{\otimes l}, \mathfrak{g}).
\end{align*}
\end{remark}

Consider the graded Lie algebra $L = (C^{\bullet +1} (\mathfrak{g} \oplus V, \mathfrak{g} \oplus V), [~,~]_\mathsf{B})$ on the space of linear maps on $\mathfrak{g} \oplus V$ with the Balavoine bracket. The next result describes the behaviour of the Balavoine bracket on homogeneous linear maps \cite[Lemma 3.6]{sheng-tang}.

\begin{prop}  For $f \in C^{k_f | l_f}  (\mathfrak{g} \oplus V, \mathfrak{g} \oplus V)$ and $g \in C^{k_g | l_g} (\mathfrak{g} \oplus V, \mathfrak{g} \oplus V)$, we have
\begin{align*}
[f,g]_\mathsf{B} \in C^{k_f + k_g | l_f + l_g}  (\mathfrak{g} \oplus V, \mathfrak{g} \oplus V).
\end{align*}
\end{prop}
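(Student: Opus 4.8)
The plan is to prove that the Balavoine bracket respects the bidegree grading by tracking, termwise, how the bracket formula distributes the $\mathfrak{g}$- and $V$-inputs and outputs. First I would unwind the definitions: a map $f \in C^{k_f|l_f}$ has arity $k_f + l_f + 1$ (it lives in $C^{\bullet+1}$, so it takes one more input than the bidegree counts), it sends a tensor word containing $k_f+1$ copies of $\mathfrak{g}$ and $l_f$ copies of $V$ into $\mathfrak{g}$, sends a word with $k_f$ copies of $\mathfrak{g}$ and $l_f+1$ copies of $V$ into $V$, and annihilates every other word. The goal is to show $[f,g]_\mathsf{B}$ vanishes on all words except those with the two prescribed $\mathfrak{g}/V$ profiles corresponding to bidegree $k_f+k_g \,|\, l_f+l_g$, and lands in the correct target in each surviving case.

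The key step is a bookkeeping argument on a single summand of $[f,g]_\mathsf{B}$, say a term of the shape $f(\ldots, g(\ldots), \ldots)$ where $g$ is plugged into one slot of $f$. The idea is that the overall map is applied to a word $X$ of total length $(k_f+l_f)+(k_g+l_g)+1 = (k_f+k_g)+(l_f+l_g)+1$, which is exactly the arity one expects for bidegree $k_f+k_g\,|\,l_f+l_g$. I would argue that for the composite to be nonzero, the inner application $g(\ldots)$ must itself be fed a word on which $g$ does not vanish; by the bidegree hypothesis on $g$, this forces the inner block to consist of either $k_g+1$ copies of $\mathfrak{g}$ and $l_g$ copies of $V$ (in which case $g$ outputs an element of $\mathfrak{g}$), or $k_g$ copies of $\mathfrak{g}$ and $l_g+1$ copies of $V$ (in which case $g$ outputs an element of $V$). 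In either case the output of $g$, together with the remaining $k_f+l_f$ outer inputs, must form a word on which $f$ does not vanish, and a direct count shows the total number of $\mathfrak{g}$-inputs in $X$ is then pinned to either $k_f+k_g+1$ (with output in $\mathfrak{g}$) or $k_f+k_g$ (with output in $V$), precisely matching the claimed bidegree $k_f+k_g\,|\,l_f+l_g$. The symmetric summand $g(\ldots, f(\ldots), \ldots)$ is handled the same way with the roles of $f$ and $g$ exchanged, yielding the identical profile.

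The main obstacle, such as it is, will be keeping the counting honest across the two branches (inner output in $\mathfrak{g}$ versus in $V$) and confirming that the shuffle sum does not mix profiles: because every shuffle merely permutes which physical slots carry $\mathfrak{g}$ versus $V$ without changing their total counts, and because the horizontal-lift convention makes $f$ and $g$ vanish identically off their own bidegree, no surviving term can escape the target profile. I expect no genuine analytic difficulty here; the work is entirely combinatorial, and the cleanest way to present it is to verify the four cases separately (output in $\mathfrak{g}$ or $V$, crossed with whether $g$ or $f$ is the inner map) and observe that each nonzero contribution lands in $C^{k_f+k_g\,|\,l_f+l_g}$. Since the statement is quoted from \cite[Lemma 3.6]{sheng-tang}, I would also note that one may simply cite that reference; but the self-contained verification above makes the bidegree additivity transparent and requires nothing beyond the definition of the Balavoine bracket and the horizontal lift.
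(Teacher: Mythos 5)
Your proposal is correct, but there is nothing in the paper to compare it against line by line: the paper offers no proof of this proposition at all, simply quoting it as \cite[Lemma 3.6]{sheng-tang}. Your fallback option (cite the reference) is thus exactly what the paper does, while your self-contained argument supplies the verification that the paper delegates. The counting itself is sound. In the surviving term $f(\ldots, g(\text{inner block}), \ldots)$, nonvanishing of $g$ forces the inner block to lie in $\mathfrak{g}^{k_g+1, l_g}$ (output in $\mathfrak{g}$) or $\mathfrak{g}^{k_g, l_g+1}$ (output in $V$), and then nonvanishing of $f$ on the outer word (remaining inputs plus the output of $g$) pins the total profile of the full input word: in the four sub-cases one gets either $k_f+k_g+1$ copies of $\mathfrak{g}$ and $l_f+l_g$ copies of $V$ with value in $\mathfrak{g}$, or $k_f+k_g$ copies of $\mathfrak{g}$ and $l_f+l_g+1$ copies of $V$ with value in $V$ --- for instance, when the inner block has $k_g+1$ copies of $\mathfrak{g}$ and the outer word lies in $\mathfrak{g}^{k_f, l_f+1}$, the outer $X$-entries carry $k_f-1$ copies of $\mathfrak{g}$ and $l_f+1$ copies of $V$, giving the profile $(k_f+k_g \,|\, l_f+l_g+1)$ with value in $V$, as required. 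Your observation that shuffles only permute slot positions without altering the $\mathfrak{g}$/$V$ counts, and that the symmetric summands $g(\ldots, f(\ldots), \ldots)$ behave identically under exchange of roles, closes the argument. The only thing your write-up gains over the paper is self-containedness; the only thing it costs is length, and it is worth noting that this kind of bidegree bookkeeping is exactly what makes the later vanishing claims in the paper (e.g.\ $[[[\pi,R]_\mathsf{B},R]_\mathsf{B},R]_\mathsf{B}=0$ in Theorem \ref{l-inf-mc}, which holds because the bidegree $-1|\bullet$ components can only absorb finitely many brackets with $R$ before leaving the allowed range) transparent, so making it explicit has genuine expository value.
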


As a consequence of the above result, we get the following.

\begin{prop}
With the above notations, 

\begin{itemize}
\item[(i)] $L' = C^{\bullet | 0} (\mathfrak{g} \oplus V, \mathfrak{g} \oplus V) \subset L$ is a Lie subalgebra;

\item[(ii)] $\mathfrak{a} = C^{-1| \bullet +1} (\mathfrak{g} \oplus V, \mathfrak{g} \oplus V) \subset L$ is an abelian subalgebra.
\end{itemize}
\end{prop}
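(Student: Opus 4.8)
The plan is to deduce both claims directly from the bidegree-additivity of the Balavoine bracket established in the previous proposition, namely that $[f,g]_\mathsf{B} \in C^{k_f+k_g \,|\, l_f+l_g}(\mathfrak{g}\oplus V, \mathfrak{g}\oplus V)$ whenever $f \in C^{k_f|l_f}$ and $g \in C^{k_g|l_g}$. Since both $L'$ and $\mathfrak{a}$ are by construction graded subspaces of $L$ (each being a direct sum of homogeneous components $C^{k|l}$), in each case it suffices to verify closure of the bracket on homogeneous elements, which I would do purely by tracking bidegrees.

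For (i), I would take homogeneous $f \in C^{k_f|0}$ and $g \in C^{k_g|0}$, both with vanishing second superscript. Bidegree additivity gives $[f,g]_\mathsf{B} \in C^{k_f+k_g\,|\,0}$, which again has $0$ in the second slot and hence lies in $L' = C^{\bullet|0}(\mathfrak{g}\oplus V, \mathfrak{g}\oplus V)$. Thus $L'$ is closed under $[~,~]_\mathsf{B}$, and being a graded subspace of the graded Lie algebra $L$, it is a Lie subalgebra.

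For (ii), the key point is a degree-counting obstruction. Taking $f \in C^{-1|l_f}$ and $g \in C^{-1|l_g}$, additivity would place $[f,g]_\mathsf{B}$ in $C^{-2\,|\,l_f+l_g}$. But a homogeneous map of bidegree $k|l$ must satisfy $k \geq -1$, since its $\mathfrak{g}$-valued part is defined on $\mathfrak{g}^{k+1,l}$, which is meaningful only for $k+1 \geq 0$; consequently the bidegree $-2\,|\,l_f+l_g$ component is identically zero, and $[f,g]_\mathsf{B} = 0$. Hence the bracket vanishes on all of $\mathfrak{a} = C^{-1|\bullet+1}(\mathfrak{g}\oplus V,\mathfrak{g}\oplus V)$, so $\mathfrak{a}$ is an abelian subalgebra.

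I do not expect a serious obstacle: once the bidegree-additivity proposition is in hand, both parts reduce to immediate bookkeeping. The only point requiring a moment's care is recording explicitly that $k=-2$ is an inadmissible first bidegree, which is precisely what forces the bracket on $\mathfrak{a}$ to be trivial rather than merely landing in some smaller nonzero subspace.
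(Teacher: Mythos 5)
Your proposal is correct and matches the paper's intended argument: the paper states this proposition as an immediate consequence of the bidegree-additivity result, with closure of $C^{\bullet|0}$ giving (i) and the inadmissibility of first bidegree $-2$ forcing the bracket to vanish on $C^{-1|\bullet+1}$ for (ii). Your explicit remark that $k=-2$ is out of range (so the bracket is zero rather than merely landing in a smaller subspace) is precisely the bookkeeping the paper leaves implicit.
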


In the following result, we characterize Leibniz algebra structures on a vector space $\mathfrak{g}$ and representations on a vector space $V$ as Maurer-Cartan elements in the graded Lie algebra $L' = C^{\bullet | 0} (\mathfrak{g} \oplus V, \mathfrak{g} \oplus V)$. Recall that an element $\theta \in L_1$ in a graded Lie algebra $(L = \oplus_{i \in \mathbb{Z}} L_i, [~,~])$ is said to be a Maurer-Cartan element if $[\theta, \theta] = 0$. This notion is consistent with Maurer-Cartan elements in an $L_\infty$-algebra (Definition \ref{defn-mc} (ii)). Let $(L = \oplus_{i \in \mathbb{Z}} L_i, [~,~])$ be a graded Lie algebra. Then it can be checked that $(L[1], \{ l_k \}_{k=1}^\infty)$ is an $L_\infty$-algebra, where $l_2 (x[1], y[1]) = (-1)^{|x|} [x, y][1]$, and $l_k = 0$ for $k \neq 2$. Then an element $\theta \in L_1$ is a Maurer-Cartan element in the graded Lie algebra $L$ if and only if the element $\theta [1]$ is a Maurer-Cartan element in the $L_\infty$-algebra $L[1]$.

\begin{thm}\label{thm-lr-both}
Let $\mathfrak{g}$ and $V$ be two vector spaces. Let $\mu_\mathfrak{g} \in \mathrm{Hom} (\mathfrak{g}^{\otimes 2}, \mathfrak{g})$, $l_V \in \mathrm{Hom} (\mathfrak{g} \otimes V, V)$ and $r_V \in \mathrm{Hom}(V \otimes \mathfrak{g}, V)$ be linear maps. Then $\mathfrak{g} = (\mathfrak{g}, \mu_\mathfrak{g})$ is a Leibniz algebra and $V = (V, l_V, r_V)$ is a representation of the Leibniz algebra $\mathfrak{g}$ if and only if $\pi = \mu_\mathfrak{g} + l_V + r_V \in (L')_1 = C^{1|0} (\mathfrak{g} \oplus V, \mathfrak{g} \oplus V)$ is a Maurer-Cartan element in the graded Lie algebra $L'$.
\end{thm}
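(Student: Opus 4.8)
The plan is to recognize $\pi = \mu_\mathfrak{g} + l_V + r_V$ as (the horizontal lift of) a single bilinear bracket on the direct sum $\mathfrak{g} \oplus V$, and then to read off the Maurer--Cartan equation as the Leibniz identity for that bracket, decomposed according to bidegree. Concretely, under the identification $C^1(\mathfrak{g}\oplus V, \mathfrak{g}\oplus V) \cong \big(\oplus_{k+l=1}\mathrm{Hom}(\mathfrak{g}^{k,l},\mathfrak{g})\big) \oplus \big(\oplus_{k+l=1}\mathrm{Hom}(\mathfrak{g}^{k,l},V)\big)$, the element $\pi$ corresponds to the bracket $[~,~]_\pi$ on $\mathfrak{g}\oplus V$ given by
\[
[(x,u),(y,w)]_\pi = \big(\mu_\mathfrak{g}(x,y),\; l_V(x,w) + r_V(u,y)\big),
\]
for $x,y\in\mathfrak{g}$ and $u,w\in V$. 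Indeed the bidegree $1|0$ pieces $\mu_\mathfrak{g}$, $l_V$, $r_V$ are exactly the $\mathfrak{g}^{2,0}\to\mathfrak{g}$ and $\mathfrak{g}^{1,1}\to V$ components of this map, and $\pi$ has no $V\otimes V$ component.

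First I would reduce the Maurer--Cartan equation in $L'$ to the one in the full graded Lie algebra $L$. Since $L' = C^{\bullet|0}(\mathfrak{g}\oplus V,\mathfrak{g}\oplus V)$ is a Lie subalgebra of $L$ (as noted in the preceding Proposition) and $\pi\in (L')_1$, the bracket $[\pi,\pi]_\mathsf{B}$ computed in $L'$ agrees with the one computed in $L$; moreover $[\pi,\pi]_\mathsf{B}\in C^{2|0}\subset L'$. Hence $\pi$ is a Maurer--Cartan element of $L'$ if and only if $[\pi,\pi]_\mathsf{B}=0$ in $L$. By the earlier correspondence between Leibniz algebra structures and Maurer--Cartan elements of the Balavoine graded Lie algebra, applied to the vector space $\mathfrak{g}\oplus V$, this holds precisely when $[~,~]_\pi$ satisfies the Leibniz identity on $\mathfrak{g}\oplus V$.

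It then remains to split the single Leibniz identity for $[~,~]_\pi$ according to how many of its three arguments lie in $\mathfrak{g}$ and how many in $V$. Feeding in three elements of $\mathfrak{g}$ returns, in its $\mathfrak{g}$-component, exactly the Leibniz identity (\ref{leib-iden}) for $\mu_\mathfrak{g}$, so $(\mathfrak{g},\mu_\mathfrak{g})$ is a Leibniz algebra. Feeding in two elements of $\mathfrak{g}$ and one of $V$ yields, according to whether the $V$-entry occupies the first, second, or third slot, the three defining identities of a representation; for instance placing $v\in V$ in the first slot and $x,y\in\mathfrak{g}$ in the remaining two produces in the $V$-component the identity $r_V(v,[x,y]_\mathfrak{g}) = r_V(r_V(v,x),y) + l_V(x,r_V(v,y))$, and the other two placements give the remaining two representation axioms. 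Finally, any input with at least two arguments from $V$ makes both sides of the Leibniz identity vanish, because the bracket of two elements of $V$ under $[~,~]_\pi$ is zero; these cases are automatic and impose no condition.

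The conceptual content is immediate from the Balavoine correspondence; the only real work, which I expect to be the main (though purely computational) obstacle, is the careful bidegree bookkeeping in the last step---in particular verifying that the three placements of the single $V$-argument reproduce precisely the three representation axioms, with matching signs and in the stated order. Carrying out those three short computations and comparing against the definition of a representation establishes both directions of the equivalence at once.
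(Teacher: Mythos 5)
Your proof is correct, and it reaches the result by a genuinely different (though closely related) route than the paper. The paper's proof is one direct computation: it expands $[\pi,\pi]_\mathsf{B}$ on an arbitrary triple $\big((x,v),(y,v'),(z,v'')\big)$ of elements of $\mathfrak{g}\oplus V$ using the explicit form of the Balavoine bracket for degree-one elements, and reads off, in the $\mathfrak{g}$- and $V$-components of the resulting expression (its equation (\ref{mc-leibrep})), the Leibniz identity for $\mu_\mathfrak{g}$ together with all three representation axioms at once --- multilinearity packages your whole case analysis into a single evaluation on mixed inputs. You instead invoke, as a key lemma, the Proposition giving the correspondence between Leibniz structures on a vector space and Maurer--Cartan elements of its Balavoine graded Lie algebra, applied to the vector space $\mathfrak{g}\oplus V$; this converts the Maurer--Cartan equation for $\pi$ into the Leibniz identity for the semidirect-product-type bracket $[~,~]_\pi$, which you then split over pure inputs. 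Both arguments are sound. What your version buys: it never needs the explicit Balavoine bracket formula (no sign bookkeeping), it makes explicit the small reduction from $L'$ to $L$ (legitimate, since $L'$ is a graded Lie subalgebra and $[\pi,\pi]_\mathsf{B}$ has bidegree $2|0$), a point the paper passes over silently, and it isolates cleanly why inputs with two or more arguments from $V$ impose no condition. What the paper's version buys: the concrete expression for $[\pi,\pi]_\mathsf{B}$, which is the sort of formula that gets reused when unpacking the twisted $L_\infty$-structure and the cohomology differential later on. Your deferred computations do come out exactly as predicted: placing the single $V$-argument in the first, second and third slots yields respectively the third, second and first axioms in the paper's definition of a representation, with no sign discrepancies.
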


\begin{proof}
For $(x,v), (y, v'), (z, v'') \in \mathfrak{g} \oplus V$, we have
\begin{align}
&[\pi, \pi]_\mathsf{B} \big( (x,v), (y, v'), (z, v'') \big) \nonumber \\
&= 2 \big\{   \pi \big(   \pi ((x,v), (y, v')) , (z, v'')   \big) - \pi \big( (x,v),  \pi ((y,v'), (z, v''))  \big)  + \pi \big( (y, v'), \pi ((x,v), (z, v''))  \big) \big\} \nonumber \\
&= 2 \big(  [[x,y]_\mathfrak{g}, z ]_\mathfrak{g} - [x, [y, z]_\mathfrak{g} ]_\mathfrak{g} + [y, [x, z]_\mathfrak{g} ]_\mathfrak{g}, ~ l_V ([x, y]_\mathfrak{g}, v'') - l_V (x, l_V (y, v'')) + l_V (y, l_V (x, v'')) \label{mc-leibrep}\\
& ~~~~ + r_V (l_V(x, v'), z) - l_V (x, r_V (v', z)) + r_V (v', [x,z]_\mathfrak{g}) + r_V (r_V (v, y), z) - r_V (v, [y, z]_\mathfrak{g}) + l_V (y, r_V (v, z))  \big). \nonumber
\end{align}
Here we use the notation $[x,y]_\mathfrak{g} = \mu_\mathfrak{g} (x,y)$, for $x,y \in \mathfrak{g}$. Note that the expression (\ref{mc-leibrep}) implies that $[\pi, \pi]_\mathsf{B} = 0$ if and only if $(\mathfrak{g}, \mu_\mathfrak{g})$ is a Leibniz algebra and $V = (V, l_V, r_V)$ is a representation of it.
\end{proof}

Let $\mathfrak{g}$ and $V$ be two vector spaces. Then the quadruple
\begin{align*}
\big(   L = (C^{\bullet +1} (\mathfrak{g} \oplus V, \mathfrak{g} \oplus V), [~,~]_\mathsf{B}),~ \mathfrak{a} = C^{-1| \bullet +1} (\mathfrak{g} \oplus V, \mathfrak{g} \oplus V),~ P,~ \triangle = 0 \big)
\end{align*}
is a $V$-data, where $P : L \rightarrow L$ is the projection onto the subspace $\mathfrak{a}$. Moreover, $L' = C^{\bullet | 0} (\mathfrak{g} \oplus V, \mathfrak{g} \oplus V) \subset L$ is a Lie subalgebra. Hence by applying Theorem \ref{v-thm} (ii), we get the following result.

\begin{thm}
There is an $L_\infty$-algebra structure on the graded vector space $L' [1] \oplus \mathfrak{a}$ with structure maps
\begin{align*}
l_2 (q[1], q'[1]) =~& (-1)^{|q|} [q, q']_\mathsf{B} [1],\\
l_k (q[1], a_1, \ldots, a_{k-1}) =~& P [ \cdots [[ q, a_1]_\mathsf{B}, a_2 ]_\mathsf{B}, \ldots, a_{k-1} ]_\mathsf{B},
\end{align*}
for homogeneous elements $q, q' \in L'$ and $ a_1, \ldots, a_{k-1} \in \mathfrak{a}$.
\end{thm}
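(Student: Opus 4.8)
The plan is to recognize this statement as a direct application of Theorem~\ref{v-thm}(ii) to the $V$-data exhibited immediately before it, namely $(L, \mathfrak{a}, P, \triangle = 0)$ with $L = (C^{\bullet+1}(\mathfrak{g}\oplus V, \mathfrak{g}\oplus V), [~,~]_\mathsf{B})$, the abelian subalgebra $\mathfrak{a} = C^{-1|\bullet+1}(\mathfrak{g}\oplus V, \mathfrak{g}\oplus V)$, the projection $P$ onto $\mathfrak{a}$, and the graded Lie subalgebra $L' = C^{\bullet|0}(\mathfrak{g}\oplus V, \mathfrak{g}\oplus V)$. The only genuine work is to confirm that all hypotheses of that theorem are met and then to read off the stated structure maps by setting $\triangle = 0$.

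First I would verify that the quadruple above is indeed a $V$-data. That $\mathfrak{a}$ is an abelian graded Lie subalgebra is the content of the preceding Proposition on $L'$ and $\mathfrak{a}$, and $P$ is by construction a projection with $\mathrm{im}(P) = \mathfrak{a}$. The one point requiring the bidegree Proposition is that $\mathrm{ker}(P)$ is a graded Lie subalgebra: since $\mathrm{ker}(P)$ is the sum of all homogeneous components $C^{k|l}$ with $k \neq -1$, and every admissible bidegree satisfies $k \geq -1$, two such elements have $k_f, k_g \geq 0$, whence $[f,g]_\mathsf{B} \in C^{k_f + k_g \mid l_f + l_g}$ with $k_f + k_g \geq 0 \neq -1$; thus $\mathrm{ker}(P)$ is closed under the bracket. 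Finally $\triangle = 0$ lies in $\mathrm{ker}(P)_1$ and satisfies $[\triangle, \triangle]_\mathsf{B} = 0$ trivially, so the $V$-data axioms all hold.

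Next I would check the extra hypothesis of Theorem~\ref{v-thm}(ii), namely that $L'$ is a graded Lie subalgebra with $[\triangle, L']_\mathsf{B} \subset L'$. The first half is the preceding Proposition, and the second half is immediate since $\triangle = 0$ forces $[\triangle, L']_\mathsf{B} = 0 \subset L'$. Applying Theorem~\ref{v-thm}(ii) then produces an $L_\infty$-algebra on $L'[1] \oplus \mathfrak{a}$ whose structure maps are the specializations of the five displayed formulas there.

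Finally I would simplify those formulas under $\triangle = 0$. The pure-$\mathfrak{a}$ brackets $l_k(a_1, \ldots, a_k) = P[\cdots[[\triangle, a_1]_\mathsf{B}, a_2]_\mathsf{B}, \ldots, a_k]_\mathsf{B}$ and the summand $-[\triangle, x]_\mathsf{B}[1]$ of $l_1$ vanish outright. For the remaining piece of $l_1$, I would observe that $P$ restricts to zero on $L'$: the homogeneous supports of $L' = C^{\bullet|0}$ (bidegrees with $l = 0$) and of $\mathfrak{a} = C^{-1|\bullet+1}$ (bidegrees with $l \geq 1$) are disjoint, so $P(x) = 0$ for every $x \in L'$, and likewise $P[\triangle, a]_\mathsf{B} = 0$; hence $l_1 = 0$ entirely. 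What survives is exactly $l_2(q[1], q'[1]) = (-1)^{|q|}[q, q']_\mathsf{B}[1]$ and $l_k(q[1], a_1, \ldots, a_{k-1}) = P[\cdots[[q, a_1]_\mathsf{B}, a_2]_\mathsf{B}, \ldots, a_{k-1}]_\mathsf{B}$, which is the claimed structure. I expect the only mildly delicate point to be this disjointness-of-supports argument ensuring $P|_{L'} = 0$; everything else is bookkeeping around Voronov's construction.
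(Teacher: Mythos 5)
Your proposal is correct and follows exactly the paper's route: the paper obtains this theorem by applying Theorem~\ref{v-thm}(ii) to the $V$-data $(L,\mathfrak{a},P,\triangle=0)$ with the Lie subalgebra $L'$, which is precisely what you do, with the hypothesis checks and the vanishing of $l_1$ and the pure-$\mathfrak{a}$ brackets spelled out in more detail than the paper bothers to. No gaps; your disjointness-of-bidegrees argument for $P|_{L'}=0$ is the right justification for the simplified structure maps.
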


\begin{remark}
The $L_\infty$-algebra $(L'[1] \oplus \mathfrak{a}, \{ l_k \}_{k \geq 1})$ constructed in the above theorem is weakly filtered with a filtration 
\begin{align*}
\mathcal{F}_1 = L'[1] \oplus \mathfrak{a}, \quad \mathcal{F}_2 = P [ L'[1] \oplus \mathfrak{a}, \mathfrak{a} ]_\mathsf{B} ~~ \text{ and } ~~ \mathcal{F}_k = P [ \cdots [ L'[1] \oplus \mathfrak{a}, \mathfrak{a}]_\mathsf{B}, \ldots, \mathfrak{a} ]_\mathsf{B}, ~\text{for } k \geq 3,
\end{align*}
and an index $N = 3$.
\end{remark}

We observe that the degree $0$ component of the graded vector space $L'[1] \oplus \mathfrak{a}$ is given by
\begin{align*}
(L'[1] \oplus \mathfrak{a})_0 = (L')_1 \oplus \mathfrak{a}_0 = \underbrace{  \mathrm{Hom} (\mathfrak{g}^{\otimes 2}, \mathfrak{g}) \oplus \mathrm{Hom}(\mathfrak{g}^{1,1}, V)  }_{ (L')_1} \oplus \underbrace{\mathrm{Hom} (V, \mathfrak{g})}_{\mathfrak{a}_0}.
\end{align*}

Suppose there are maps $\mu_\mathfrak{g} \in \mathrm{Hom} (\mathfrak{g}^{\otimes 2}, \mathfrak{g})$, $l_V \in \mathrm{Hom}(\mathfrak{g} \otimes V , V)$, $r_V \in \mathrm{Hom} (V \otimes \mathfrak{g}, V)$ and $R \in \mathrm{Hom}(V, \mathfrak{g})$. Note that  $\pi = \mu_\mathfrak{g} + l_V + r_V \in (L')_1$ can be considered as an element $\pi[1] \in (L'[1])_0$. We take the element
$\theta = (\pi [1], R) \in (L'[1] \oplus \mathfrak{a})_0$.

\begin{thm}\label{l-inf-mc}
With the above notations, $V \xrightarrow{R} \mathfrak{g}$ is a rRB Leibniz algebra if and only if $\theta = (\pi[1], R) \in (L'[1] \oplus \mathfrak{a})_0$ is a Maurer-Cartan element in the $L_\infty$-algebra $(L'[1] \oplus \mathfrak{a}, \{ l_k \}_{k \geq 1})$.
\end{thm}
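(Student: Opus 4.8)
The plan is to write out the Maurer--Cartan equation $\sum_{k\geq 1}\frac{1}{k!}\,l_k(\theta,\ldots,\theta)=0$ for $\theta=\pi[1]+R$ and to read off its two components in the splitting $L'[1]\oplus\mathfrak{a}$. The decisive simplification is that here $\triangle=0$: this forces $l_1=0$ (since $P$ kills $L'$ and $[\triangle,-]_\mathsf{B}=0$) and makes every structure map built purely from $\mathfrak{a}$-entries vanish. Consequently the only non-vanishing structure maps are $l_2$ on two $L'[1]$-slots and the maps $l_k(q[1],a_1,\ldots,a_{k-1})=P[\cdots[[q,a_1]_\mathsf{B},a_2]_\mathsf{B},\ldots]_\mathsf{B}$ carrying a single $L'[1]$-slot, so when we substitute $\theta=\pi[1]+R$ the surviving terms are $l_2(\pi[1],\pi[1])$ and the terms $l_k(\pi[1],R,\ldots,R)$.

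The next step is a bidegree count. Using the additivity $[f,g]_\mathsf{B}\in C^{k_f+k_g\,|\,l_f+l_g}$ together with the fact that $P$ is the projection onto $\mathfrak{a}=C^{-1|\bullet+1}$, and noting $\pi\in C^{1|0}$, $R\in C^{-1|1}$, the iterated bracket $[\cdots[[\pi,R]_\mathsf{B},R]_\mathsf{B},\ldots,R]_\mathsf{B}$ with $(k-1)$ copies of $R$ lands in $C^{2-k\,|\,k-1}$; hence $P$ annihilates it unless $2-k=-1$, i.e. $k=3$. In particular the $l_2$ cross-term $P[\pi,R]_\mathsf{B}\in P\big(C^{0|1}\big)=0$, and $l_k(\pi[1],R,\ldots,R)=0$ for all $k\geq 4$. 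Taking into account the $k$ placements of the single $\pi[1]$ and that all entries have internal degree $0$ (so every Koszul sign is trivial), the Maurer--Cartan equation collapses to $-\tfrac12[\pi,\pi]_\mathsf{B}[1]+\tfrac12[[\pi,R]_\mathsf{B},R]_\mathsf{B}=0$. Since the two summands lie in the independent factors $L'[1]$ and $\mathfrak{a}$, this is equivalent to the pair $[\pi,\pi]_\mathsf{B}=0$ and $[[\pi,R]_\mathsf{B},R]_\mathsf{B}=0$.

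The equation $[\pi,\pi]_\mathsf{B}=0$ is exactly the condition handled by Theorem \ref{thm-lr-both}: it holds iff $(\mathfrak{g},\mu_\mathfrak{g})$ is a Leibniz algebra and $(V,l_V,r_V)$ is a representation of it. It then remains to identify $[[\pi,R]_\mathsf{B},R]_\mathsf{B}=0$ with the relative Rota--Baxter condition. For this I would evaluate the Balavoine brackets directly on $V\otimes V$, using that the horizontal lift $\widehat{R}$ vanishes on $\mathfrak{g}$ and that $\pi$ vanishes on $V\otimes V$. A first bracket gives $[\pi,R]_\mathsf{B}(v,v')=l_V(R(v),v')+r_V(v,R(v'))$ (landing in $V$), and bracketing once more and evaluating on $(v,v')$ produces $[[\pi,R]_\mathsf{B},R]_\mathsf{B}(v,v')=2\big([R(v),R(v')]_\mathfrak{g}-R(l_V(R(v),v')+r_V(v,R(v')))\big)$. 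This vanishes for all $v,v'$ precisely when $R$ is a relative Rota--Baxter operator, completing the equivalence.

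The main obstacle is this final computation: running the Balavoine bracket formula twice while tracking the shuffles and signs, and verifying that the horizontal-lift bookkeeping (in particular $\widehat{R}|_\mathfrak{g}=0$) collapses the many a priori terms to the two copies of $[R(v),R(v')]_\mathfrak{g}$ and the $R$-corrections. Everything else is degree counting and the appeal to Theorem \ref{thm-lr-both}.
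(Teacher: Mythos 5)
Your proposal is correct and follows essentially the same route as the paper: both arguments reduce the Maurer--Cartan equation, via the vanishing of $l_1$ on $\theta$ and of all terms with $k \geq 4$ (by bidegree reasons), to the pair of conditions $[\pi,\pi]_\mathsf{B} = 0$ and $[[\pi,R]_\mathsf{B},R]_\mathsf{B} = 0$, and then invoke Theorem \ref{thm-lr-both} for the first. The only difference is that where the paper cites \cite{sheng-tang} to identify $[[\pi,R]_\mathsf{B},R]_\mathsf{B}=0$ with the relative Rota--Baxter condition, you carry out that Balavoine-bracket computation directly, and your computation (including the overall factor of $2$) is correct.
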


\begin{proof}
First observe that $l_1 \big((\pi [1], R) \big) = 0$ (from the definition of $l_1$). Moreover, for homogeneous bidegree reason, we have $[[[ \pi, R ]_\mathsf{B}, R]_\mathsf{B}, R]_\mathsf{B} = 0$. This implies that $l_k \big( (\pi[1], R), \ldots, (\pi[1], R)   \big) = 0$, for $k \geq 4.$ Hence
\begin{align}
&\sum_{k=1}^\infty \frac{1}{k!} \big( (\pi[1], R), \ldots, (\pi[1], R) \big) \nonumber \\
&= \frac{1}{2!} l_2 \big( (\pi[1], R), (\pi[1], R) \big) + \frac{1}{3!} l_3 \big( (\pi[1], R), (\pi[1], R), (\pi[1], R) \big) \nonumber \\
&= \big(  - \frac{1}{2} [\pi, \pi]_\mathsf{B} [1], ~ \frac{1}{2} [[ \pi, R]_\mathsf{B}, R ]_\mathsf{B}  \big). \label{mc-pi1}
\end{align}
Note that
\begin{align*}
[\pi, \pi]_\mathsf{B} = 0 &\leftrightsquigarrow (\mathfrak{g}, \mu_\mathfrak{g}) \text{ is a Leibniz algebra and } (V, l_V, r_V) \text{ is representation (cf. Theorem \ref{thm-lr-both})},\\
[[ \pi, R]_\mathsf{B}, R ]_\mathsf{B} = 0 &\leftrightsquigarrow R : V \rightarrow \mathfrak{g} \text{ is a relative Rota-Baxter operator (see \cite{sheng-tang})}.
\end{align*}
Therefore, the expression in (\ref{mc-pi1}) vanishes if and only if $V  \xrightarrow{R} \mathfrak{g}$ is a rRB Leibniz algebra. This completes the proof.
\end{proof}

Given an $L_\infty$-algebra and a Maurer-Cartan element, one can construct a new $L_\infty$-algebra \cite{getzler,laza-rota}. The following result generalizes this in the present context.

\begin{thm}\label{twisted-l-inf}
Let $V \xrightarrow{R} \mathfrak{g}$ be a rRB Leibniz algebra with the corresponding Maurer-Cartan element $\theta = (\pi [1], R) \in (L'[1] \oplus \mathfrak{a})_0$ in the $L_\infty$-algebra $(L'[1] \oplus \mathfrak{a}, \{ l_k \}_{k \geq 1})$. Then we have the following.

(i) There is an $L_\infty$-algebra $(L'[1] \oplus \mathfrak{a}, \{ l_k^{(\pi[1],R)} \}_{k \geq 1})$, where
\begin{align*}
l_k^{(\pi [1], R)} \big( (x_1[1], a_1), \ldots, (x_k [1], a_k)  \big) = \sum_{i=0}^\infty ~l_{k+i} \big( \underbrace{(\pi[1], R), \ldots, (\pi[1], R)}_{i}, (x_1[1], a_1), \ldots, (x_k [1], a_k) \big), \text{ for } k \geq 1.
\end{align*}

(ii) For any $\theta' = (\pi'[1], R') \in (L'[1] \oplus \mathfrak{a})_0$, the sum $\theta + \theta' = ((\pi + \pi' )[1], R+ R')$ defines a rRB Leibniz algebra if and only if $\theta' = (\pi'[1], R')$ is a Maurer-Cartan element in the $L_\infty$-algebra $(L'[1] \oplus \mathfrak{a}, \{ l_k^{(\pi[1],R)} \}_{k \geq 1})$.
\end{thm}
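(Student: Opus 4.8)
The plan is to recognize the statement as the instance, for our particular $L_\infty$-algebra, of the general procedure of twisting a (weakly filtered) $L_\infty$-algebra by a Maurer-Cartan element \cite{getzler, laza-rota}, and then to deduce part (ii) from a binomial expansion together with the Maurer-Cartan property of $\theta$ established in Theorem \ref{l-inf-mc}.

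For part (i), I would first record that $(L'[1] \oplus \mathfrak{a}, \{l_k\})$ is weakly filtered with index $N = 3$ (as in the remark preceding the theorem) and that $\theta = (\pi[1], R)$ is a Maurer-Cartan element by Theorem \ref{l-inf-mc}. The first step is to verify that each twisted map $l_k^{(\pi[1], R)}$ is well-defined. Because $l_{k+i}$ lands in $\mathcal{F}_{k+i}(L'[1] \oplus \mathfrak{a})$ once $k+i \geq 3$, the summands sink into arbitrarily deep filtration layers, so the series converges in $L'[1] \oplus \mathfrak{a} \cong \lim_n (L'[1] \oplus \mathfrak{a}) / \mathcal{F}_n$; in the present case the sums are in fact finite, since the iterated Balavoine brackets $[\cdots[[\pi, R]_\mathsf{B}, R]_\mathsf{B}, \ldots]_\mathsf{B}$ vanish for bidegree reasons once enough copies of $R$ (an element of bidegree $-1|1$) have been inserted, exactly as in the proof of Theorem \ref{l-inf-mc}. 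The remaining steps are the two $L_\infty$-axioms: graded symmetry of $l_k^{(\pi[1],R)}$ is inherited from that of $l_{k+i}$ together with the fact that $\theta$ is of degree $0$, so the inserted copies of $\theta$ commute freely past the arguments; and the shifted higher Jacobi identities for $\{l_k^{(\pi[1],R)}\}$ follow from those for $\{l_k\}$ by substituting the Maurer-Cartan equation for $\theta$ into the terms in which an inserted $\theta$ is itself fed into an inner bracket. As this is precisely the content of the general twisting theorem, I would invoke \cite{getzler, laza-rota} rather than reprove the combinatorial identity.

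For part (ii), the key is to expand the Maurer-Cartan function of $\theta + \theta'$ in the untwisted $L_\infty$-algebra. As both $\theta$ and $\theta'$ have degree $0$, all Koszul signs are trivial, and multilinearity together with graded symmetry gives
\begin{align*}
\sum_{k \geq 1} \frac{1}{k!}\, l_k(\theta + \theta', \ldots, \theta + \theta') = \sum_{\substack{i, j \geq 0 \\ i + j \geq 1}} \frac{1}{i!\, j!}\, l_{i+j}(\underbrace{\theta, \ldots, \theta}_{i}, \underbrace{\theta', \ldots, \theta'}_{j}).
\end{align*}
Separating the terms with $j = 0$ and reorganizing the rest yields
\begin{align*}
\sum_{i \geq 1} \frac{1}{i!}\, l_i(\theta, \ldots, \theta) + \sum_{j \geq 1} \frac{1}{j!} \Big( \sum_{i \geq 0} \frac{1}{i!}\, l_{i+j}(\underbrace{\theta, \ldots, \theta}_{i}, \underbrace{\theta', \ldots, \theta'}_{j}) \Big) = \sum_{j \geq 1} \frac{1}{j!}\, l_j^{(\pi[1],R)}(\theta', \ldots, \theta'),
\end{align*}
where the first sum vanishes because $\theta$ is a Maurer-Cartan element, and the bracketed inner sum is exactly $l_j^{(\pi[1], R)}(\theta', \ldots, \theta')$. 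Since $\theta + \theta' = ((\pi + \pi')[1], R + R')$ is a Maurer-Cartan element of the untwisted $L_\infty$-algebra if and only if $V \xrightarrow{R + R'} \mathfrak{g}$ is a rRB Leibniz algebra (Theorem \ref{l-inf-mc}), the displayed equality gives the claimed equivalence.

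The main obstacle is bookkeeping rather than conceptual. In part (i) one must make the convergence/termination argument airtight, and in part (ii) the reindexing must match the twisted structure maps of part (i) precisely. In particular, the combinatorial weights $\frac{1}{i!}$ attached to the inserted copies of $\theta$ must be the same in the definition of $l_k^{(\pi[1],R)}$ and in the binomial expansion, and it is exactly this compatibility of factors that makes the ``if and only if'' fall out; verifying it (and hence that the inner sum is literally $l_j^{(\pi[1],R)}(\theta',\ldots,\theta')$) is the step I would check most carefully.
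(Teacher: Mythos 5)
Your proposal is correct and takes the same route the paper (implicitly) does: the paper offers no separate proof of this theorem, deferring part (i) to the general twisting construction of \cite{getzler,laza-rota} after the weak-filtration remark and Theorem \ref{l-inf-mc}, and part (ii) is exactly the binomial-expansion argument you write out.

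One substantive remark, because the point you single out as needing the most care is precisely where something is off --- in the paper's displayed formula, not in your argument. As printed, the definition of $l_k^{(\pi[1],R)}$ carries no weight $\tfrac{1}{i!}$ on the term with $i$ inserted copies of $(\pi[1],R)$, whereas your expansion identifies the inner sum as $\sum_{i \geq 0} \tfrac{1}{i!}\, l_{i+j}(\theta,\ldots,\theta,\theta',\ldots,\theta')$. These two do not agree here: since $l_m$ vanishes whenever $m \geq 4$ inputs are drawn from $\{\theta,\theta'\}$ (bidegree reasons, as in the proof of Theorem \ref{l-inf-mc}), the only place the weights matter is the term with $i=2$, $j=1$, namely $l_3(\theta,\theta,\theta')$, which carries coefficient $\tfrac{1}{2}$ in the Maurer--Cartan expansion of $\theta+\theta'$ but coefficient $1$ in the unweighted twisted equation. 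That term, built from expressions such as $P[[\pi,R]_\mathsf{B},R']_\mathsf{B}$ and $P[[\pi',R]_\mathsf{B},R]_\mathsf{B}$, is generically nonzero, so part (ii) would fail for the formula as printed. The conclusion is that the missing factor $\tfrac{1}{i!}$ in the theorem's display is a typo (the weighted definition is the one used in \cite{getzler,laza-rota}), and your version of the computation, which includes it, is the correct one.
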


\section{Representations of rRB Leibniz algebras}\label{sec-4}
In this section, we define representations of an rRB Leibniz algebra and provide some examples. Given an rRB Leibniz algebra and a representation, we also construct the semidirect product rRB Leibniz algebra.

\begin{defn}\label{repn-defn}
Let $V \xrightarrow{R} \mathfrak{g}$ be a rRB Leibniz algebra. A {\bf representation} of it consists of a triple $(W \xrightarrow{S} \mathfrak{h}, l, r)$ in which $W \xrightarrow{S} \mathfrak{h}$ is a $2$-term chain complex with both $\mathfrak{h}$ and $W$ are representations of the Leibniz algebra $\mathfrak{g}$, and there are linear maps (called the left and right pairings) $l : V \otimes \mathfrak{h} \rightarrow W$ and $r : \mathfrak{h} \otimes V \rightarrow W$ satisfying the following set of identities
\begin{align}\label{rep-1s}
\begin{cases} l_W (x, l(v, h)) = l (l_V (x,v), h) + l (v, l_\mathfrak{h} (x, h)),\\
l (v, l_\mathfrak{h} (x, h)) = l (r_V (v, x), h) + l_W (x, l (v, h)),\\
l (v, r_\mathfrak{h} (h, x)) = r_W (l (v, h), x) + r (h, r_V (v, x)),
\end{cases}
\end{align}
\begin{align}\label{rep-2s}
\begin{cases}  l_W (x, r(h, v)) = r (l_\mathfrak{h} (x, h), v) + r (h, l_V (x, v)), \\
r (h, l_V (x, v)) = r (r_\mathfrak{h} (h, x), v) + l_W (x, r (h, v)),\\
r (h, r_V (v, x)) = r_W (r (h, v), x) + l ( v, r_\mathfrak{h}(h, x)),
\end{cases}
\end{align}
\begin{align}\label{rep-3s}
\begin{cases}
l_\mathfrak{h} (R(v), S(w)) = S \big( l_W (R(v), w) + l (v, S(w)) \big),\\
r_\mathfrak{h} (S(w), R(v)) = S \big( r (S (w), v) + r_W (w, R(v)) \big),
\end{cases}
\end{align}
for $x \in \mathfrak{g}$, $v \in V$, $h \in \mathfrak{h}$ and $w \in W$.
\end{defn}

Note that all the identities in (\ref{rep-1s}) and (\ref{rep-2s}) are analogs of the Leibniz identity (\ref{leib-iden}). Hence all of them can be understood by the following string diagram with suitable inputs and suitable maps on the boxes below:\\\\

\begin{center}
\begin{tikzpicture}[scale=0.5]
 \draw (3,0) -- (3,2);	 \draw (0,2) -- (6,2); \draw (0,2) -- (0,3); \draw (0,3) -- (6,3); \draw (6,2) -- (6,3); \draw (1,3) -- (1,8); \draw (3,6) -- (7,6); \draw (3,5) -- (3,6); \draw (3,5) -- (7,5); \draw (7,5) -- (7,6); \draw (5,3) -- (5,5); \draw (4,8) -- (4,6); \draw (6,8) -- (6,6); \draw (8.5,5) -- (9,5); \draw (8.5, 4.75) -- (9,4.75); \draw (10,5) -- (14,5); \draw (14,6) -- (10,6); \draw (10,5) -- (10,6); \draw (14,5) -- (14,6); \draw (11,6) -- (11,8); \draw (13,6) -- (13,8); \draw (12,3) -- (12,5); \draw (11,3) -- (17,3); \draw (11,2) -- (17,2); \draw (11,2) -- (11,3); \draw (17,2) -- (17,3); \draw (16,3) -- (16,8); \draw (14,0) -- (14,2); \draw (18,5) -- (18.5,5); \draw (18.25,4.75) -- (18.25,5.25); \draw (23,0) -- (23,2); \draw (20,2) -- (26,2); \draw (20,3) -- (26,3); \draw (20,3) -- (20,2); \draw (26,3) -- (26,2); \draw (21,6) -- (21,3); \draw (25,4) -- (25,3); \draw (23,4) -- (27,4); \draw (23,5) -- (27,5); \draw (23,5) -- (23,4); \draw (27,5) -- (27,4); \draw (24,6) -- (24,5); \draw (26,8) -- (26,5);\draw (21,8) -- (21,7); \draw (24,8) -- (24,7); \draw (21,6) -- (24,7); \draw (24,6) -- (21,7); 
\end{tikzpicture}
\end{center}

\medskip

\noindent On the other hand, both the identities in (\ref{rep-3s}) are analogs of the Rota-Baxter identity (\ref{rb-ident}). Thus, both of them can be understood using suitable string diagram. By viewing rRB Leib algebras and their representations using string diagrams, one can easily generalize these to symmetric monoidal categories.

\begin{exam} (Adjoint representation) Let $V \xrightarrow{R} \mathfrak{g}$ be a rRB Leibniz algebra. Then the triple $(V \xrightarrow{R} \mathfrak{g}, l_\mathrm{ad}, r_\mathrm{ad})$ is a representation of the rRB Leibniz algebra $V \xrightarrow{R} \mathfrak{g}$, where $\mathfrak{g}$ is equipped with the adjoint representation and $V$ is equipped with the given representation of the Leibniz algebra $\mathfrak{g}$. Moreover, the pairings are given by $l_\mathrm{ad}= r_V : V \otimes \mathfrak{g} \rightarrow V$ and $r_\mathrm{ad}= l_V : \mathfrak{g} \otimes V \rightarrow V$. This is called the adjoint representation.
\end{exam}

\begin{exam}\label{rpr-rb} (Representations of RB Leibniz algebras)
Let $(\mathfrak{g}, R)$ be a RB Leibniz algebra. A representation of $(\mathfrak{g}, R)$ consists of a pair $(V, R_V)$ where $V$ is a representation of $\mathfrak{g}$ and $R_V: V \rightarrow V$ is a linear map satisfying for $x \in \mathfrak{g}$, $v \in V$,
\begin{align*}
l_V (R(x), R_V(v)) =~& R_V \big(  l_V (R(x), v) + l_V (x, R_V (v))  \big),\\
r_V ( R_V(v), R(x)) =~& R_V \big( r_V ( R_V (v), x) + r_V (v, R(x)) \big).
\end{align*}
Then it is easy to see that the triple $(V \xrightarrow{R_V} V, l_V, r_V)$ is a representation of the rRB Leibniz algebra $\mathfrak{g} \xrightarrow{R} \mathfrak{g}.$
\end{exam}

\begin{exam}
Let $V \xrightarrow{R} \mathfrak{g}$  and $V' \xrightarrow{R'} \mathfrak{g}'$ be two rRB Leibniz algebras and $(\phi, \psi)$ be a morphism between them. Then the triple $(V' \xrightarrow{R'} \mathfrak{g}', l, r)$ is a representation of the rRB Leibniz algebra $V \xrightarrow{R} \mathfrak{g}$, where $\mathfrak{g}'$ and $V'$ are representations of $\mathfrak{g}$ via
\begin{align*}
l_{\mathfrak{g}'} (x, x') := [\phi (x), x']_{\mathfrak{g}'}, ~~~~ r_{\mathfrak{g}'} (x', x) := [x', \phi (x)]_{\mathfrak{g}'} ~~~ \text{ ~~ and ~~ } ~~~ l_{V'} (x, v') := l_{V'}^{\mathfrak{g}'} (\phi (x), v'), ~~~~ r_V (v', x) := r_{V'}^{\mathfrak{g}'} (v', \phi (x)),
\end{align*} 
for $x \in \mathfrak{g}$, $x' \in \mathfrak{g}'$ and $v' \in V'$. Here $l_{V'}^{\mathfrak{g}'}$ and $r_{V'}^{\mathfrak{g}'}$ denote the left and right $\mathfrak{g}'$-actions on $V'$, respectively. Finally, the pairings $l : V \otimes \mathfrak{g}' \rightarrow V'$ and $r : \mathfrak{g}' \otimes V \rightarrow V'$ are given by
\begin{align*}
l (v,x') := r_{V'}^{\mathfrak{g}'} (\psi (v), x')  ~~~ \text{ ~~ and ~~ } ~~~ r (x', v) := l_{V'}^{\mathfrak{g}'} (x', \psi (v)), ~ \text{ for } v \in V,~ x' \in \mathfrak{g}'.
\end{align*}
\end{exam}

\begin{exam}
Let $V \xrightarrow{R} \mathfrak{g}$ be a rRB Leibniz algebra. Consider the Lie algebra $\mathfrak{g}_\mathrm{Lie}$ which is the quotient of $\mathfrak{g}$ by the ideal generated by the elements $[x,x]_\mathfrak{g}$, for $x \in \mathfrak{g}$. We also consider the space $V_\mathrm{Lie}$ which is the quotient of $V$ by the subspace generated by elements of the form $l_V (x,v) + r_V (v,x)$, for $x \in \mathfrak{g}$ and $v \in V$. Then $V_\mathrm{Lie}$ is a representation of the Lie algebra $\mathfrak{g}_\mathrm{Lie}$ with the action $\mathfrak{g}_\mathrm{Lie} \otimes V_\mathrm{Lie} \rightarrow V_\mathrm{Lie}$,  $( \lfloor x \rfloor , \lfloor v \rfloor) \mapsto \lfloor l_V (x, v) \rfloor = - \lfloor r_V (v, x) \rfloor$, for $\lfloor x \rfloor \in \mathfrak{g}_\mathrm{Lie}$ and $\lfloor v \rfloor \in V_\mathrm{Lie}$. Here $\lfloor ~ \rfloor$ denotes the class of an element. Hence $\mathfrak{g}_\mathrm{Lie}$ can be considered as a Leibniz algebra and $V_\mathrm{Lie}$ a representation of it.
Note that the map $R : V \rightarrow \mathfrak{g}$ induces a map $\lfloor R \rfloor : V_\mathrm{Lie} \rightarrow \mathfrak{g}_\mathrm{Lie}$ given by $\lfloor R \rfloor \big( \lfloor v \rfloor \big) = \lfloor R(v) \rfloor$, for $\lfloor v \rfloor \in V_\mathrm{Lie}$. In fact, $V_\mathrm{Lie} \xrightarrow{\lfloor R \rfloor} \mathfrak{g}_\mathrm{Lie}$ is a rRB Leibniz algebra. Then the given rRB Leibniz algebra $V \xrightarrow{R} \mathfrak{g}$ is a representation of the rRB Leibniz algebra $V_\mathrm{Lie} \xrightarrow{\lfloor R \rfloor} \mathfrak{g}_\mathrm{Lie}$, where $\mathfrak{g}$ and $V$ are representations of the Leibniz algebra $\mathfrak{g}_\mathrm{Lie}$ with left and right $\mathfrak{g}_\mathrm{Lie}$-actions
\begin{align*}
l^{\mathfrak{g}_\mathrm{Lie}}_\mathfrak{g} (\lfloor x \rfloor, y) = [x,y]_\mathfrak{g}, \quad r^{\mathfrak{g}_\mathrm{Lie}}_\mathfrak{g} (y, \lfloor x \rfloor) = [y,x]_\mathfrak{g}, \quad l^{\mathfrak{g}_\mathrm{Lie}}_V ( \lfloor x \rfloor, v) = l_V (x, v) ~~ \text{ and } ~~ r^{\mathfrak{g}_\mathrm{Lie}}_V (v, \lfloor x \rfloor) = r_V (v,x),
\end{align*}
for $\lfloor x \rfloor \in \mathfrak{g}_\mathrm{Lie}$, $y \in \mathfrak{g}$ and $v \in V$. The pairing maps $l : V_\mathrm{Lie} \otimes \mathfrak{g} \rightarrow V$ and $r: \mathfrak{g} \otimes V_\mathrm{Lie} \rightarrow V$ are respectively given by $l (\lfloor v \rfloor , x) = r_V (v, x)$ and $r (x, \lfloor v \rfloor) = l_V (x, v)$, for $\lfloor v \rfloor \in V_\mathrm{
Lie}$, $x \in \mathfrak{g}$.
\end{exam}

Let $\mathfrak{g}$ be a Leibniz algebra and $V$ be a representation. Then the dual vector space $V^*$ also carries a representation of the Leibniz algebra $\mathfrak{g}$ with left and right $\mathfrak{g}$-actions $l_{V^*} : \mathfrak{g} \otimes V^* \rightarrow V^*$ and $r_{V^*} : V^* \otimes \mathfrak{g} \rightarrow V^*$ are given by
\begin{align*}
l_{V^*} (x, f_V) (v)= - f_V (l_V (x,v)) ~~~ \text{ and } ~~~ r_{V^*} (f_V, x)(v) = f_V (l_V (x, v) + r_V (v,x)),
\end{align*}
for $x \in \mathfrak{g}$, $f_V \in V^*$ and $v \in V.$ See \cite[Lemma 2.10]{sheng-tang} for more details. With this notion of dual representations of a Leibniz algebra, we can dualize representations of a rRB Leibniz algebra.

\begin{prop}\label{prop-dual}
Let $V \xrightarrow{R} \mathfrak{g}$ be a rRB Leibniz algebra and $(W \xrightarrow{S} \mathfrak{h}, l, r)$ be a representation of it. Then $ (\mathfrak{h}^* \xrightarrow{-S^*} W^*, l^*, r^*)$ is also a representation, where $W^*, \mathfrak{h}^*$ are equipped with dual representations of the Leibniz algebra $\mathfrak{g}$, and the pairings $l^* : V \otimes W^* \rightarrow \mathfrak{h}^*$ and $r^* : W^* \otimes V \rightarrow \mathfrak{h}^*$ are given by
\begin{align*}
l^* (v, f_W) (h) = - f_W (l (v, h)) ~~~ \text{ and } ~~~ r^* (f_W, v)(h) = f_W ( l (v, h) + r (h, v)), ~ \text{ for } v \in V, f_W \in W^*, h \in \mathfrak{h}.
\end{align*}
\end{prop}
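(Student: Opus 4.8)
The plan is to verify directly that the dual data $(\mathfrak{h}^* \xrightarrow{-S^*} W^*, l^*, r^*)$ satisfies all the requirements of Definition \ref{repn-defn}, with $W^*$ now playing the role of the target Leibniz-module $\mathfrak{h}$ and $\mathfrak{h}^*$ playing the role of $W$. First I would record that $\mathfrak{h}^* \xrightarrow{-S^*} W^*$ is trivially a $2$-term chain complex, and that $W^*$ and $\mathfrak{h}^*$ are representations of the Leibniz algebra $\mathfrak{g}$ by the dual representation construction recalled just before the statement (following \cite[Lemma 2.10]{sheng-tang}). It therefore remains to verify the six module-type identities in the analogues of (\ref{rep-1s}) and (\ref{rep-2s}) together with the two Rota-Baxter-type identities in the analogue of (\ref{rep-3s}), now for the pairings $l^*$ and $r^*$.

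The uniform technique I would use is dualization. To check any one of these identities, both sides are elements of $\mathfrak{h}^*$ (for the module identities) or of $W^*$ (for the Rota-Baxter identities), so I evaluate them on an arbitrary test element $h \in \mathfrak{h}$ (respectively $w \in W$) and unravel the defining formulas for $l_{W^*}, r_{W^*}, l_{\mathfrak{h}^*}, r_{\mathfrak{h}^*}, l^*, r^*$ and $-S^*$. After cancelling the double signs produced by the dual actions, each scalar identity collapses to the statement that $f_W$ (resp. $f_\mathfrak{h}$) annihilates a fixed element of $W$ (resp. $\mathfrak{h}$); since $f_W, f_\mathfrak{h}$ are arbitrary, this reduces to an identity among elements of $W$ (resp. $\mathfrak{h}$), which is precisely one of the original identities in (\ref{rep-1s})--(\ref{rep-3s}) for the representation $(W \xrightarrow{S} \mathfrak{h}, l, r)$. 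For instance, the first identity in the dualized (\ref{rep-1s}) reduces, after evaluation on $h$, exactly to the first identity of (\ref{rep-1s}), and the second to the second; the third dualized identity of (\ref{rep-1s}) turns out to require the combination of the second and third original identities.

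For the Rota-Baxter identities (\ref{rep-3s}) the same evaluation procedure applies, but here the sign in $-S^*$ is essential. Evaluating the dualized first identity on $w \in W$ and using $(-S^*)(f_\mathfrak{h})(w) = -f_\mathfrak{h}(S(w))$, the two minus signs coming from the dual left action and from $-S^*$ conspire so that the scalar identity reduces, after removing $f_\mathfrak{h}$, exactly to $l_\mathfrak{h}(R(v), S(w)) = S(l_W(R(v), w) + l(v, S(w)))$, which is the first identity of (\ref{rep-3s}); the second is handled symmetrically using the second identity of (\ref{rep-3s}).

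The computations are entirely mechanical once the substitution dictionary is fixed, so the only real difficulty is bookkeeping: keeping track of which role each of $\mathfrak{h}^*, W^*$ plays, matching each dualized identity to the correct original identity (or, as in the third case, to the correct linear combination), and verifying that the signs from the dual representations together with the sign in $-S^*$ cancel correctly. No genuinely new idea beyond the dualization principle is needed, and closing the proof amounts to running this reduction through all eight identities.
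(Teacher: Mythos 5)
Your proposal is correct and follows essentially the same route as the paper's proof: direct verification of the dualized identities by evaluating on test elements $h \in \mathfrak{h}$ and $w \in W$, reducing each one to the original identities in (\ref{rep-1s})--(\ref{rep-3s}). Your bookkeeping observations also match what the paper actually does, including the point that the third dualized identity of (\ref{rep-1s}) uses the sum of the second and third original identities, and that the sign in $-S^*$ is what makes the (\ref{rep-3s}) verification close.
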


\begin{proof}
For any $x \in \mathfrak{g}$, $v \in V$, $f_W \in W^*$ and $h \in \mathfrak{h}$, we have
\begin{align*}
&l_{\mathfrak{h}^*} (x, l^* (v, f_W)) (h) = - l^* (v, f_W) (l_\mathfrak{h} (x, h)) = f_W \big(  l (v, l_\mathfrak{h} (x, h))  \big), \\
&l^* (l_V (x, v), f_W) (h) + l^* (v, l_{W^*} (x, f_W)) (h) = - f_W \big(  l (   l_\mathfrak{h}(x, v), h) \big) - l_{W^*} (x, f_W) (l(v, h)) \\
&\qquad \qquad \qquad \qquad \qquad \qquad \qquad \qquad \qquad = - f_W \big(  l (   l_\mathfrak{h}(x, v), h) \big) + f_W \big( l_W (x, l(v, h))  \big).
\end{align*}
It follows from (\ref{rep-1s}) that $l_{\mathfrak{h}^*} (x, l^* (v, f_W)) = l^* (l_V (x, v), f_W) + l^* (v, l_{W^*} (x, f_W))$. Similarly, 
\begin{align*}
&l^* (v, l_{W^*} (x, f_W)) (h) = - l_{W^*} (x, f_W) (l (v, h)) = f_W \big( l_W (x, l(v, h))  \big), \\
&l^* (r_V (v, x), f_W) (h) + l_{\mathfrak{h}^*} (x, l^*(x, f_W)) (h) = - f_W ( l(r_V (v, x), h)) - l^* (v, f_W) (l_\mathfrak{h} (x, h)) \\
&\qquad \qquad \qquad \qquad \qquad \qquad \qquad \qquad \qquad = - f_W \big( l (r_V (v, x), h)   \big) + f_W \big( l (v, l_\mathfrak{h} (x, h)) \big).
\end{align*}
Hence from (\ref{rep-1s}), we get $l^* (v, l_{W^*} (x, f_W)) = l^* (r_V (v, x), f_W) + l_{\mathfrak{h}^*} (x, l^*(x, f_W))$. We also have
\begin{align*}
&l^* (v, r_{W^*} (f_W, x)) (h) = - r_{W^*} (f_W, x) (l (v, h)) = - f_W \big(  l_W (x, l(v, h)) + r_W (l(v, h), x) \big),\\
&r_{\mathfrak{h}^*} (l^* (v, f_W), x) (h) + r^* (f_W, r_V (v, x)) (h) \\
&= l^* (v, f_W) \big(   l_\mathfrak{h} (x, h) + r_\mathfrak{h} (h, x) \big) + f_W \big(  l (r_V (v, x), h) \big) + f_W \big( r (h, r_V (v, x) )  \big) \\
&= - f_W \big(  l (v, l_\mathfrak{h} (x, h) )  \big) - f_W \big( l (v, r_\mathfrak{h}(h, x))  \big) + f_W \big(  l (r_V (v, x), h) \big) + f_W \big( r (h, r_V (v, x) )  \big).
\end{align*}
Thus, we have $l^* (v, r_{W^*} (f_W, x)) = r_{\mathfrak{h}^*} (l^* (v, f_W), x) + r^* (f_W, r_V (v, x))$  (follows from (\ref{rep-1s})). Therefore, the identities in (\ref{rep-1s}) hold for the dual structures. By similar observations, we can show that the identities in (\ref{rep-2s}) hold for dual structures. Finally, for $v \in V$, $f_\mathfrak{h} \in \mathfrak{h}^*$ and $w \in W$,
\begin{align*}
&l_{W^*} (R(v), -S^* (f_\mathfrak{h})) (w) + S^* \big( l_{\mathfrak{h}^*} (R(v), f_\mathfrak{h}) + l^* (v, -S^* (f_\mathfrak{h}))   \big) (w) \\
&= S^* (f_\mathfrak{h}) ( l_W (R(v), w)) + l_{\mathfrak{h}^*} (R(v), f_\mathfrak{h}) (S(w)) - l^* (v, S^* (f_\mathfrak{h})) (S(w)) \\
&= f_\mathfrak{h} \big( S (l_W (R(v), w ) )  \big) - f_\mathfrak{h} \big( l_\mathfrak{h} (R(v), S(w))  \big) + f_\mathfrak{h} \big(  S (l (v, S(w)))  \big) = 0 \quad (\text{by } (\ref{rep-3s}))
\end{align*}
and
\begin{align*}
&r_{W^*} ( -S^* (f_\mathfrak{h}), R(v))(w) + S^* \big( r^* (-S^* (f_\mathfrak{h}), v) + r_{\mathfrak{h}^*} (f_\mathfrak{h}, R(v)) \big)(w) \\
&= -S^* (f_\mathfrak{h}) (l_W (R(v), w) + r_W (w, R(v))) - r^* (S^* (f_\mathfrak{h}), v) (S (w)) + r_{\mathfrak{h}^*} (f_\mathfrak{h}, R (v)) (S (w)) \\
&= - f_\mathfrak{h} \big(  S (l_W (R(v), w)) + S (r_W (w, R(v)))   \big) - f_\mathfrak{h} \big(  S (l (v, S (w))) + S (r (S(w), v))  \big) \\
& \qquad \qquad \qquad \qquad + f_\mathfrak{h} (l_\mathfrak{h} (R(v), S (w))) + f_\mathfrak{h} (r_\mathfrak{h} (S (w), R(v))) = 0 \quad (\text{by } (\ref{rep-3s})).
\end{align*}
This verifies the identities in (\ref{rep-3s}) for dual structures. Hence $(\mathfrak{h}^* \xrightarrow{-S^*} W^*, l^*, r^*)$ is a representation.
\end{proof}

\begin{exam} (Coadjoint representation) Let $V \xrightarrow{R} \mathfrak{g}$ be a rRB Leibniz algebra. Then $(\mathfrak{g}^* \xrightarrow{-R^*} V^*, l^*_{\mathrm{ad}}, r^*_{\mathrm{ad}})$ is a representation of the rRB Leibniz algebra $V \xrightarrow{R} \mathfrak{g}$, where the pairings $l^*_{\mathrm{ad}} : V \otimes V^* \rightarrow \mathfrak{g}^*$ and $r^*_{\mathrm{ad}} : V^* \otimes V \rightarrow \mathfrak{g}^*$ are given by
\begin{align*}
l^*_{\mathrm{ad}} (v, f_V)(x) = - f_V (r_V (v,x)) ~~~ \text{ and } ~~~ r^*_{\mathrm{ad}} (f_V, v)(x) = f_V (l_V (x,v) + r_V (v,x)), ~ \text{ for } v \in V, f_V \in V^*, x \in \mathfrak{g}.
\end{align*}
This is called the coadjoint representation.
\end{exam}

In the following result, we show that a representation of an rRB Leibniz algebra gives rise to a representation of the induced Leibniz algebra.

\begin{prop}\label{vr-h}
Let $V \xrightarrow{R} \mathfrak{g}$ be a rRB Leibniz algebra and $(W \xrightarrow{S} \mathfrak{h}, l, r)$ be a representation of it. Then the vector space $\mathfrak{h}$ carries a representation of the induced Leibniz algebra $V_R$ with left and right $V_R$-actions $l_\triangleright : V_R \otimes \mathfrak{h} \rightarrow \mathfrak{h}$ and $r_\triangleleft : \mathfrak{h} \otimes V_R \rightarrow \mathfrak{h}$ are given by 
\begin{align*}
l_\triangleright (v, h) = l_\mathfrak{h} (R(v), h) - S \circ l (v, h) ~~~ \text{ and } ~~~ r_\triangleleft (h, v) = r_\mathfrak{h} (h, R(v)) - S \circ r (h, v), ~ \text{ for }  v \in V_R, h \in \mathfrak{h}.
\end{align*}
\end{prop}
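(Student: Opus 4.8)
The plan is to check directly the three axioms characterising a representation of the induced Leibniz algebra $V_R$ on $\mathfrak{h}$. Writing $v, v' \in V_R$, $h \in \mathfrak{h}$, and recalling $[v,v']_R = l_V(R(v),v') + r_V(v,R(v'))$, these axioms read
\begin{align*}
l_\triangleright(v, l_\triangleright(v', h)) &= l_\triangleright([v,v']_R, h) + l_\triangleright(v', l_\triangleright(v, h)),\\
l_\triangleright(v, r_\triangleleft(h, v')) &= r_\triangleleft(l_\triangleright(v, h), v') + r_\triangleleft(h, [v,v']_R),\\
r_\triangleleft(h, [v,v']_R) &= r_\triangleleft(r_\triangleleft(h, v), v') + l_\triangleright(v, r_\triangleleft(h, v')).
\end{align*}
For each identity I would substitute the definitions of $l_\triangleright$ and $r_\triangleleft$ and expand both sides, immediately invoking the relative Rota-Baxter identity in the form $R([v,v']_R) = [R(v), R(v')]_\mathfrak{g}$ to rewrite every term in which $R$ is applied to an induced bracket.

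After expanding, the terms sort themselves according to how many copies of $S$ occur in outermost position. The terms carrying no $S$ involve only $l_\mathfrak{h}$, $r_\mathfrak{h}$ and the bracket $[R(v),R(v')]_\mathfrak{g}$, and they match across each equation precisely by the three defining identities of the $\mathfrak{g}$-representation $\mathfrak{h}$ (the first, second and third Leibniz-representation axioms, respectively, specialised to $x = R(v)$, $y = R(v')$). This disposes of the ``purely $\mathfrak{h}$'' part of every identity.

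The remaining terms all either carry an overall $S$ or contain a factor of the shape $l_\mathfrak{h}(R(v), S(-))$ or $r_\mathfrak{h}(S(-), R(v))$. Here the Rota-Baxter compatibilities (\ref{rep-3s}) are the decisive tool: applying them rewrites each such mixed $R$--$S$ term as $S$ of pairing expressions, and in doing so it exactly cancels the two-$S$ terms produced by the expansion. What is left is an identity among terms of the form $S(\cdots)$, which reduces to a purely pairing-level statement. That statement then follows from the appropriate lines of (\ref{rep-1s}) and (\ref{rep-2s}): for the first axiom one uses the first two lines of (\ref{rep-1s}) (with $x = R(v)$, respectively $x = R(v')$); for the third axiom the last two lines of (\ref{rep-2s}); and for the second (mixed) axiom the third line of (\ref{rep-1s}) together with the first line of (\ref{rep-2s}).

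The main obstacle is purely organisational: each identity generates on the order of a dozen terms, and the verification hinges on tracking signs correctly and on pairing every mixed $R$--$S$ term with the matching two-$S$ term so that (\ref{rep-3s}) forces a cancellation. It is worth noting that no compatibility between $S$ and the $\mathfrak{g}$-actions (no equivariance of $S$) is required; the map $S$ enters only through (\ref{rep-3s}), which is exactly what closes the computation.
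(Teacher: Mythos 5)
Your proposal is correct and follows essentially the same route as the paper's proof: the paper expands the first Leibniz-representation axiom directly, cancels the pure $l_\mathfrak{h}$/$r_\mathfrak{h}$ terms via the $\mathfrak{g}$-representation axioms for $\mathfrak{h}$ at $x=R(v)$, $y=R(v')$, uses (\ref{rep-3s}) to convert the mixed $R$--$S$ terms (absorbing the double-$S$ terms), and closes with the first two lines of (\ref{rep-1s}), declaring the remaining two axioms "similar." Your write-up in fact goes slightly further by pinpointing the exact lines of (\ref{rep-1s}) and (\ref{rep-2s}) needed for the second and third axioms, and your remark that $S$ enters only through (\ref{rep-3s}) is accurate.
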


\begin{proof}
For $v, v' \in V_R$ and $h \in \mathfrak{h}$, we have
\begin{align*}
&l_\triangleright (v, l_\triangleright (v', h)) - l_\triangleright ([v,v']_R, h) - l_\triangleright (v', l_\triangleright (v,h) ) \\
&= l_\mathfrak{h} (R(v) , l_\triangleright (v', h)) - S \circ l (v, l_\triangleright (v', h)) - l_\mathfrak{h} (R[v, v']_R, h) + S \circ l ([v,v']_R, h) \\
& \qquad \qquad- l_\mathfrak{h} (R(v'), l_\triangleright (v, h)) + S \circ l (v', l_\triangleright (v, h)) \\
&= \cancel{l_\mathfrak{h} (R(v), l_\mathfrak{h} (R(v'), h)  )} - l_\mathfrak{h} (R(v), S \circ l (v', h)) - S \circ l (v, l_\mathfrak{h} (R(v'), h)) + S \circ l (v, S \circ l (v', h)) \\
&\qquad \qquad - \cancel{l_\mathfrak{h} ([R(v), R(v')]_\mathfrak{g}, h)} + S \circ l (l_V (R(v), v'), h) + S \circ l (r_V (v, R(v')), h) \\
&\qquad \qquad - \cancel{l_\mathfrak{h} (R(v'), l_\mathfrak{h} (R(v), h))} + l_\mathfrak{h} (R(v'), S \circ l (v, h)) + S \circ l (v', l_\mathfrak{h} (R(v), h)) - S \circ l (v', S \circ l (v, h)) \\
&= - S \big( l_W (R(v), l(v', h)) + l (v, S \circ l (v', h))   \big) - S \circ l (v, l_\mathfrak{h} (R(v'), h)) + S \circ l (v, S \circ l (v', h)) \\
& \qquad \qquad + S \circ l (l_V (R(v), v'), h) + S \circ l (r_V (v, R(v')) , h) + S \big(  l_W (R(v'), l(v, h)) + l (v', S \circ l (v,h))  \big) \\ 
& \qquad \qquad +S \circ l (v', l_\mathfrak{h} (R(v), h)) - S \circ l (v', S \circ l (v, h)) \\
&= 0.
\end{align*}
Similarly, it is straightforward to verify that
\begin{align*}
l_\triangleright (v, r_\triangleleft (h, v')) - r_\triangleleft (l_\triangleright (v,h), h') - r_\triangleleft (h, [v, v']_R) = 0,\\
r_\triangleleft (h, [v, v']_R) - r_\triangleleft (r_\triangleleft (h, v), v') - l_\triangleright (v, r_\triangleleft (h, v')) = 0.
\end{align*}
This shows that $(\mathfrak{h}, l_\triangleright, r_\triangleleft)$ is a representation of the Leibniz algebra $V_R$.
\end{proof}

We will use this representation of the Leibniz algebra $V_R$ in the description of the cohomology of the rRB Leibniz algebra $V \xrightarrow{R} \mathfrak{g}$ (see Section \ref{sec-5}).

\begin{remark} \cite[Theorem 2.7]{tang-leibniz} Let $V \xrightarrow{R} \mathfrak{g}$ be a rRB Leibniz algebra. Then the vector space $\mathfrak{g}$ is a representation of the induced Leibniz algebra $V_R$ with left and right $V_R$-actions given by
\begin{align*}
l_\triangleright (v, x) = [R(v), x]_\mathfrak{g} - R (r_V (v, x)) ~~~ \text{ and } ~~~ r_\triangleleft (x, v) = [x, R(v) ]_\mathfrak{g} - R (l_V (x, v)), ~ \text{ for } v \in V_R, x \in \mathfrak{g}.
\end{align*}
\end{remark}

\medskip

Given a Leibniz algebra and a representation of it, one can construct their semidirect product \cite{loday-pira}. In the following result, we generalize this result to rRB Leibniz algebras.

\begin{thm}\label{thm-semi}
Let $V \xrightarrow{R} \mathfrak{g}$ be a rRB Leibniz algebra and $(W \xrightarrow{S} \mathfrak{h}, l, r)$ be a representation of it. Then
\begin{itemize}
\item[(i)] the direct sum $\mathfrak{g} \oplus \mathfrak{h}$ carries a Leibniz algebra structure (denoted by $\mathfrak{g} \ltimes \mathfrak{h}$) with the bracket
\begin{align*}
[(x, h), (y, k)]_{\mathfrak{g} \oplus \mathfrak{h}} := \big( [x, y]_\mathfrak{g}, ~ l_\mathfrak{h} (x, k) + r_\mathfrak{h} (h, y) \big), ~ \text{ for } (x, h), (y, k) \in \mathfrak{g} \oplus \mathfrak{h}.
\end{align*} 

\item[(ii)] The vector space $V \oplus W$ can be equipped with a representation of the Leibniz algebra $\mathfrak{g} \ltimes \mathfrak{h}$ with left and right action maps
\begin{align*}
l_\ltimes  ( (x,h), (v, w)) :=~& (l_V (x, v), ~ l_W (x, w) + r (h, v)),\\
r_\ltimes ((v,w), (x,h)) :=~& (r_V (v,x),~ l(v, h) + r_W (w,x)).
\end{align*}

\item[(iii)] With the above structures, the map $R \oplus S  : V \oplus W \rightarrow \mathfrak{g} \oplus \mathfrak{h}$ is a relative Rota-Baxter operator. In other words, $V \oplus W \xrightarrow{R \oplus S} \mathfrak{g} \oplus \mathfrak{h}$ is a rRB Leibniz algebra. This is called the semidirect product.

\item[(iv)] Consider the inclusion maps $i_\mathfrak{g} : \mathfrak{g} \hookrightarrow \mathfrak{g} \oplus \mathfrak{h}$ and $i_V : V \hookrightarrow V \oplus W$. Then the pair $(i_\mathfrak{g}, i_V)$ is a morphism of rRB Leibniz algebras from $V \xrightarrow{R} \mathfrak{g}$ to the semidirect product $V \oplus W \xrightarrow{R \oplus S} \mathfrak{g} \oplus \mathfrak{h}$. 
\end{itemize}
\end{thm}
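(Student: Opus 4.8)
The plan is to verify the four assertions by direct computation, in each case exploiting that the bracket, the action maps, and the operator on the direct sums are built component-wise out of the structure on $\mathfrak{g}, V$ and the representation data on $\mathfrak{h}, W$. The organizing principle throughout is that every identity to be checked splits according to which summand it lands in ($\mathfrak{g}$ or $\mathfrak{h}$, respectively $V$ or $W$): the summand living in the original algebra or module reproduces a known identity, while the summand living in the representation is precisely one of the compatibility axioms (\ref{rep-1s})--(\ref{rep-3s}).

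For part (i) I would substitute $[(x,h),(y,k)]_{\mathfrak{g}\oplus\mathfrak{h}} = ([x,y]_\mathfrak{g}, l_\mathfrak{h}(x,k)+r_\mathfrak{h}(h,y))$ into the Leibniz identity (\ref{leib-iden}). The $\mathfrak{g}$-component returns the Leibniz identity for $\mathfrak{g}$, and the $\mathfrak{h}$-component, once expanded, reduces to the three axioms expressing that $\mathfrak{h}$ is a representation of $\mathfrak{g}$; this is exactly the classical semidirect product of a Leibniz algebra with a representation and can be quoted from \cite{loday-pira}. For part (ii) I would check the three representation axioms for the pair $(l_\ltimes, r_\ltimes)$. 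The $V$-components use only that $V$ is a representation of $\mathfrak{g}$, and the fact that $W$ is a representation of $\mathfrak{g}$ discharges the terms of the form $l_W\circ l_W$ and their analogues; what remains are the genuinely new cross terms involving the pairings $l$ and $r$, and collecting these according to which module element ($h$ or $k$) they contain shows that they cancel exactly by the identities of (\ref{rep-1s}) and (\ref{rep-2s}).

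Part (iii) is where the Rota-Baxter condition is tested. Writing $(R\oplus S)(v,w)=(R(v),S(w))$, the left-hand side of the relative Rota-Baxter identity is $([R(v),R(v')]_\mathfrak{g},\, l_\mathfrak{h}(R(v),S(w'))+r_\mathfrak{h}(S(w),R(v')))$, while the right-hand side is $R\oplus S$ applied to $l_\ltimes((R(v),S(w)),(v',w'))+r_\ltimes((v,w),(R(v'),S(w')))$. On the $\mathfrak{g}$-component the two sides agree precisely by the relative Rota-Baxter identity for $R$, and on the $\mathfrak{h}$-component one applies $S$ and invokes the two identities of (\ref{rep-3s}) to rewrite $l_\mathfrak{h}(R(v),S(w'))$ and $r_\mathfrak{h}(S(w),R(v'))$; the four resulting terms then match those produced by $S$ on the right-hand side. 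Part (iv) is immediate: the inclusions $i_\mathfrak{g}$ and $i_V$ are bracket- and action-preserving by construction, and $(R\oplus S)\circ i_V = i_\mathfrak{g}\circ R$ holds on the nose, so the conditions of Definition \ref{rrbl-mor} are satisfied.

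Conceptually there is no obstacle, since the axioms (\ref{rep-1s})--(\ref{rep-3s}) in Definition \ref{repn-defn} were tailored precisely so that the semidirect product closes up; the theorem is in effect a confirmation that this tailoring is correct. The only real work, and the main source of potential error, is the bookkeeping in part (ii): one must expand all three representation axioms for $V\oplus W$ and sort the cross terms carefully so that each is eliminated by the correct line of (\ref{rep-1s}) or (\ref{rep-2s}).
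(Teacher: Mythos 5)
Your proposal is correct and follows essentially the same route as the paper: part (i) is quoted from the classical semidirect product construction of \cite{loday-pira}, part (ii) is verified by expanding the representation axioms componentwise and cancelling the cross terms against the identities (\ref{rep-1s})--(\ref{rep-2s}), part (iii) splits into the relative Rota-Baxter identity for $R$ on the $\mathfrak{g}$-component and the identities (\ref{rep-3s}) on the $\mathfrak{h}$-component after applying $S$, and part (iv) is the immediate check of Definition \ref{rrbl-mor} for the inclusions. The only difference is one of presentation: the paper writes out the first axiom of part (ii) and the computation of part (iii) in full and leaves the remaining verifications as ``similar,'' exactly as you propose to do.
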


\begin{proof}
The part (i) is a standard result \cite{loday-pira}. Next, we observe that
\begin{align*}
&l_\ltimes \big( (x,h), l_\ltimes ((y,k), (v,w))   \big) \\
&= l_\ltimes \big(    (x,h), (l_V (y,v),~ l_W (y,w) + r (k, v)) \big) \\
&= \big( l_V (x, l_V (y,v)), ~ l_W (x, l_W (y, w)) + l_W (x, r(k, v)) + r (h, l_V (y, v))    \big) \\
&= \big(  l_V ([x,y]_\mathfrak{g}, v) ,~ l_W ([x,y]_\mathfrak{g}, w) + r (l_\mathfrak{h} (x,k), v) + r (r_\mathfrak{h} (h,y), v)  \big) \\
& ~~~~ + \big(  l_V (y, l_V (x,v)),~ l_W (y, l_W (x,w)) + l_W (y, r (h, v)) + r (k, l_V (x,v))   \big) \\
&= l_\ltimes \big(  ([x,y]_\mathfrak{g}, ~l_\mathfrak{h} (x,k) + r_\mathfrak{h} (h, y)), (v, w)  \big) + l_\ltimes \big( (y,k), (l_V (x,v),~ l_W (x, w) + r (h, v))   \big) \\
&= l_\ltimes \big(  [(x,h), (y, k)]_{\mathfrak{g} \oplus \mathfrak{h}}, (v,w) \big) + l_\ltimes \big( (y,k), l_\ltimes ((x,h), (v,w))  \big).
\end{align*}
Similarly, one can verify that
\begin{align*}
l_\ltimes \big( (x,h), r_\ltimes ((v,w), (y, k)) \big) =~& r_\ltimes \big( l_\ltimes ((x,h), (v,w)), (y, k)  \big) + r_\ltimes \big( (v,w), [(x,h), (y, k)]_{\mathfrak{g} \oplus \mathfrak{h}}  \big),\\
r_\ltimes \big( (v,w), [(x,h), (y, k)]_{\mathfrak{g} \oplus \mathfrak{h}}   \big) =~& r_\ltimes \big(  r_\ltimes ((v,w), (x,h)), (y,k)  \big) + l_\ltimes \big( (x,h), r_\ltimes ((v,w), (y, k))   \big).
\end{align*}
This shows that $V \oplus W$ is a representation of the Leibniz algebra $\mathfrak{g} \ltimes \mathfrak{h}$. This completes the proof of part (ii). Moreover, we see that
\begin{align*}
&[ (R \oplus S) (v,w), (R \oplus S) (v', w') ]_{\mathfrak{g} \oplus \mathfrak{h}} \\
&= [ (R(v), S(w)), (R(v'), S(w')) ]_{\mathfrak{g} \oplus \mathfrak{h}} \\
&= \big( [ R(v), R(v')]_\mathfrak{g} , ~ l_\mathfrak{h} (R(v), S(w')) + r_\mathfrak{h} (S(w), R(v'))   \big) \\
&= \big(  R (l_V (R(v), v') + r_V (v, R(v') ) ), ~ S (  l_W ( R(v), w') + l (v, S (w')) ) + S ( r ( S(w), v') + r_W (w, R(v')))  \big) \\
&= (R \oplus S) \big(  \big(  l_V (R(v), v'), ~ l_W (R(v), w') + r (S(w), v')  \big)   + \big(  r_V (v, R(v')), ~ l (v, S(w')) + r_W (w, R(v') ) \big)   \big) \\
&= (R \oplus S) \big(    l_\ltimes (  (R(v), S(w)) , (v', w')) + r_\ltimes ( (v,w), ( R(v'), S (w')) \big).
\end{align*}
which shows that $V \oplus W \xrightarrow{R \oplus S} \mathfrak{g} \oplus \mathfrak{h}$ is a rRB Leibniz algebra. Hence we have proved part (iii). Finally,  the part (iv) follows as $i_\mathfrak{g} : \mathfrak{g} \rightarrow \mathfrak{g} \oplus \mathfrak{h}$ is a morphism of Leibniz algebras, the map $i_V$ satisfies $i_V (l_V (x, v)) = l_\ltimes (i_\mathfrak{g} (x), i_V (v))$ and $i_V (r_V (v,x)) = r_\ltimes (i_V (v), i_\mathfrak{g} (x))$, and $i_\mathfrak{g} \circ R = (R \oplus S) \circ i_V$.
\end{proof}

\section{Cohomology of rRB Leibniz algebras}\label{sec-5}
In this section, we introduce the cohomology of an rRB Leibniz algebra with coefficients in a representation. Some applications of this cohomology are given in the next section.

\medskip

\noindent {\bf Cohomology with coefficients in the adjoint representation.}  Let $V \xrightarrow{R} \mathfrak{g}$ be a rRB Leibniz algebra, where $\mathfrak{g} = (\mathfrak{g}, \mu_\mathfrak{g})$ is a Leibniz algebra, $V = (V, l_V, r_V)$ is a representation and $R$ is a relative Rota-Baxter operator. Take the element $\pi = \mu_\mathfrak{g} + l_V + r_V \in (L')_1 = C^{1|0} (\mathfrak{g} \oplus V, \mathfrak{g} \oplus V)$ which we consider as an element $\pi [1] \in (L'[1])_0$. We have seen in Theorem \ref{l-inf-mc} that $\theta = (\pi [1], R) \in (L' [1] \oplus \mathfrak{a})_0$ is a Maurer-Cartan element in the $L_\infty$-algebra $(L'[1] \oplus \mathfrak{a}, \{ l_k \}_{ k \geq 1})$. Therefore, we can consider the $L_\infty$-algebra $(L'[1] \oplus \mathfrak{a}, \{ l_k^{(\pi[1], R)} \}_{ k \geq 1})$ twisted by $\theta = (\pi[1], R)$.

We will now define the cohomology of the rRB Leibniz algebra $V \xrightarrow{R} \mathfrak{g}$ (with coefficients in the adjoint representation). For each $n \geq 0$, we define an abelian group $C^n_\mathrm{rRB}(V \xrightarrow{R} \mathfrak{g})$ by
\begin{align*}
C^n_\mathrm{rRB} (V \xrightarrow{R} \mathfrak{g}) = \begin{cases}   0 & \text{ if } n =0,\\
\mathrm{Hom} (\mathfrak{g}, \mathfrak{g}) \oplus \mathrm{Hom} (V, V) & \text{ if } n =1,\\
\mathrm{Hom}(\mathfrak{g}^{\otimes n}, \mathfrak{g}) \oplus \mathrm{Hom}(\mathfrak{g}^{n-1, 1}, V) \oplus \mathrm{Hom}(V^{\otimes n-1}, \mathfrak{g}) & \text{ if } n \geq 2. \end{cases}
\end{align*}
Observe that an element $(\kappa, \eta) \in C^1_\mathrm{rRB} (V \xrightarrow{R} \mathfrak{g})$ gives rise to an element $( (\kappa + \eta)[1], 0) \in (L'[1] \oplus \mathfrak{a})_{-1}$. Moreover, the space $C^n_\mathrm{rRB}(V \xrightarrow{R} \mathfrak{g})$, for $n \geq 2$, is isomorphic to $(L'[1] \oplus \mathfrak{a})_{n-2}$ by
\begin{align*}
\mathrm{Hom}(\mathfrak{g}^{\otimes n}, \mathfrak{g}) \oplus \mathrm{Hom}(\mathfrak{g}^{n-1, 1}, V) \oplus \mathrm{Hom}(V^{\otimes n-1}, \mathfrak{g}) \ni   (\alpha, \beta, \gamma ) \leftrightsquigarrow ((\alpha + \beta)[1], \gamma) \in (L'[1] \oplus \mathfrak{a})_{n-2}.
\end{align*}

We define a map $\delta_\mathrm{rRB} : C^n_\mathrm{rRB}(V \xrightarrow{R} \mathfrak{g}) \rightarrow C^{n+1}_\mathrm{rRB}(V \xrightarrow{R} \mathfrak{g})$ by
\begin{align*}
\delta_\mathrm{rRB} ((\alpha, \beta, \gamma )) = (-1)^{n-2}~ l_1^{(\pi[1], R)} ((\alpha + \beta)[1], \gamma),~ \text{ for } (\alpha, \beta, \gamma) \in C^{n \geq 1}_\mathrm{rRB}(V \xrightarrow{R} \mathfrak{g}).
\end{align*}
Up to some scalar coefficient, the map $\delta_\mathrm{rRB}$ is the first map $l_1^{(\pi[1], R)}$ of the $L_\infty$-algebra $(L'[1] \oplus \mathfrak{a}, \{ l_k^{(\pi[1], R)} \}_{ k \geq 1})$ twisted by $ (\pi[1], R)$. This implies that $(\delta_\mathrm{rRB})^2 = 0$. In other words, $\{ C^\bullet_\mathrm{rRB} (V \xrightarrow{R} \mathfrak{g}), \delta_\mathrm{rRB} \}$ is a cochain complex.
Let $Z^n_\mathrm{rRB} (V \xrightarrow{R} \mathfrak{g})$ be the space of $n$-cocycles and $B^n_\mathrm{rRB} (V \xrightarrow{R} \mathfrak{g})$ be the space of $n$-coboundaries. The corresponding cohomology groups are called the cohomology of the rRB Leibniz algebra $V \xrightarrow{R} \mathfrak{g}$ with coefficients in itself. They are denoted by $H^\bullet_\mathrm{rRB} (V \xrightarrow{R} \mathfrak{g})$.
\medskip

\noindent {\bf Cohomology with coefficients in a representation.} Let $V \xrightarrow{R} \mathfrak{g}$ be a rRB Leibniz algebra and $(W \xrightarrow{S} \mathfrak{h}, l, r)$ be a representation of it. Consider the semidirect product rRB Leibniz algebra $V \oplus W \xrightarrow{R \oplus S} \mathfrak{g} \oplus \mathfrak{h}$ given in Theorem \ref{thm-semi}. For each $n \geq 0$, we define an abelian group $C^n_\mathrm{rRB} (V \xrightarrow{R} \mathfrak{g} ; W \xrightarrow{S} \mathfrak{h})$ by
\begin{align*}
C^n_\mathrm{rRB} (V \xrightarrow{R} \mathfrak{g} ; W \xrightarrow{S} \mathfrak{h}) = \begin{cases}  0 & \text{ if } n =0,\\
\mathrm{Hom} (\mathfrak{g}, \mathfrak{h}) \oplus \mathrm{Hom} (V,W) & \text{ if } n =1,\\
\mathrm{Hom}(\mathfrak{g}^{\otimes n}, \mathfrak{h}) \oplus \mathrm{Hom}(\mathfrak{g}^{n-1, 1}, W) \oplus \mathrm{Hom}(V^{\otimes n-1}, \mathfrak{h}) & \text{ if } n \geq 1. \end{cases}
\end{align*}
Note that an element $(\alpha, \beta, \gamma) \in C^n_\mathrm{rRB} (V \xrightarrow{R} \mathfrak{g} ; W \xrightarrow{S} \mathfrak{h})$ can be lifted to an element $\widetilde{(\alpha, \beta, \gamma)} \in C^n_\mathrm{rRB} (V \oplus W \xrightarrow{R \oplus S } \mathfrak{g} \oplus \mathfrak{h})$ by
\begin{align*}
\widetilde{(\alpha, \beta, \gamma)} = (\widehat{\alpha}, \widehat{\beta}, \widehat{\gamma}),
\end{align*}
where $~\widehat{}~$ stands the horizontal lift defined in (\ref{h-lift}). Observe that the element $(\alpha, \beta, \gamma)$ can be obtained from $\widetilde{(\alpha, \beta, \gamma)}$ just by restricting it to $C^n_\mathrm{rRB} (V \xrightarrow{R} \mathfrak{g} ; W \xrightarrow{S} \mathfrak{h})$. Moreover, $\widetilde{(\alpha, \beta, \gamma)} = 0$ implies that $(\alpha, \beta, \gamma) = 0$. Finally, the differential $\delta_\mathrm{rRB} (\widetilde{(\alpha, \beta, \gamma)})$ restricts to an element in ${C^{n+1}_\mathrm{rRB} (V \xrightarrow{R} \mathfrak{g}; W \xrightarrow{S} \mathfrak{h})}$. We define a map $\delta_\mathrm{rRB}' : C^n_\mathrm{rRB} (V \xrightarrow{R} \mathfrak{g} ; W \xrightarrow{S} \mathfrak{h}) \rightarrow C^{n+1}_\mathrm{rRB} (V \xrightarrow{R} \mathfrak{g} ; W \xrightarrow{S} \mathfrak{h})$ by
\begin{align*}
\delta'_\mathrm{rRB} ((\alpha, \beta, \gamma)) = \delta_\mathrm{rRB} ( \widetilde{(\alpha, \beta, \gamma)})|_{C^{n+1}_\mathrm{rRB} (V \xrightarrow{R} \mathfrak{g}; W \xrightarrow{S} \mathfrak{h})}.
\end{align*}
Here $\delta_\mathrm{rRB}$ is the coboundary operator of the rRB Leibniz algebra $V \oplus W \xrightarrow{R \oplus S} \mathfrak{g} \oplus \mathfrak{h}$ with coefficients in the adjoint representation. We observe that $\widetilde{   \delta'_\mathrm{rRB} ((\alpha, \beta, \gamma))} = \delta (\widetilde{(\alpha, \beta, \gamma)})$. Hence
\begin{align*}
\widetilde{(\delta'_\mathrm{rRB})^2 ((\alpha, \beta, \gamma))} = \delta_\mathrm{rRB} \big( \widetilde{\delta'_\mathrm{rRB} ((\alpha, \beta, \gamma))} \big) = \delta_\mathrm{rRB} (   \widetilde{(\alpha, \beta, \gamma)}) = 0
\end{align*}
which implies that $(\delta_\mathrm{rRB}')^2 = 0$. In the following, we write the explicit description of the coboundary operator $\delta'_\mathrm{rRB}$. First, for any $\alpha \in \mathrm{Hom}(\mathfrak{g}^{\otimes n}, \mathfrak{h})$, we define a map
$\delta^\alpha_{\mathfrak{g}, W} : \mathrm{Hom} (\mathfrak{g}^{n-1,1}, W) \rightarrow \mathrm{Hom}(\mathfrak{g}^{n,1}, W)$ by
\begin{align*}
(\delta^\alpha_{\mathfrak{g}, W} (\beta)) (x_1, \ldots, x_{n+1}) =& \sum_{i=1}^n (-1)^{i+1} (l+l_W) \big( x_i, (\alpha + \beta) (x_1, \ldots, \widehat{x_i}, \ldots, x_{n+1}) \big) \\
+& (-1)^{n+1} (r + r_W) \big( (\alpha + \beta) (x_1, \ldots, x_n), x_{n+1}  \big) \\
+& \sum_{1 \leq i < j \leq n+1} (-1)^{i} ~ \beta \big(   x_1, \ldots, \widehat{x_i}, \ldots, x_{j-1}, (\mu_\mathfrak{g} + l_V + r_V)(x_i, x_j), x_{j+1}, \ldots, x_{n+1} \big),
\end{align*} 
for $x_1, \ldots, \widehat{x_p}, \ldots, x_{n+1} \in \mathfrak{g}$ and $x_p \in V$ with $1 \leq p \leq n+1$. Moreover, let $\delta_{V, \mathfrak{h}} : \mathrm{Hom} (V^{\otimes n-1}, \mathfrak{h}) \rightarrow \mathrm{Hom} (V^{\otimes n}, \mathfrak{h})$ be the coboundary operator of the Leibniz algebra $V_R$ with coefficients in the representation $\mathfrak{h}$, given in Proposition \ref{vr-h}. Explicitly,
\begin{align*}
(\delta_{V, \mathfrak{h}} (\gamma)) (v_1, \ldots, v_n ) =& \sum_{i=1}^{n-1} (-1)^{i+1} \big(   l_\mathfrak{h} (R(v_i),\gamma (v_1, \ldots, \widehat{v_i}, \ldots, v_n)) - S \circ l (v_i,\gamma (v_1, \ldots, \widehat{v_i}, \ldots, v_n) ) \big) \\
+& (-1)^{n}~ r_\mathfrak{h} (\gamma (v_1, \ldots, v_{n-1}), R(v_n)) - (-1)^{n}~ S \circ r ( \gamma (v_1, \ldots, v_{n-1}), v_n) \\
+& \sum_{1 \leq i < j \leq n} (-1)^i~ \gamma \big( v_1, \ldots, \widehat{v_i}, \ldots, v_{j-1},  l_V (R(v_i), v_j) + r_V (v_i, R(v_j)), v_{j+1}, \ldots, v_n   \big),
\end{align*}
for $v_1, \ldots, v_n \in V$. Finally, we consider another map $h_R : \mathrm{Hom}(\mathfrak{g}^{\otimes n} , \mathfrak{h}) \oplus \mathrm{Hom} (\mathfrak{g}^{n-1, 1}, W) \rightarrow \mathrm{Hom}(V^{\otimes n}, \mathfrak{h})$ by
\begin{align*}
(h_R (\alpha, \beta)) (v_1, \ldots, v_n ) = (-1)^n \big\{ \alpha (R(v_1), \ldots, R(v_n)) - \sum_{i=1}^n S (\beta ( R(v_1), \ldots, v_i, \ldots, R(v_n))   )   \big\}.
\end{align*}
With all these maps, the map $\delta'_\mathrm{rRB} : C^n_\mathrm{rRB} (V \xrightarrow{R} \mathfrak{g}; W \xrightarrow{S} \mathfrak{h}) \rightarrow C^{n+1}_\mathrm{rRB} (V \xrightarrow{R} \mathfrak{g}; W \xrightarrow{S} \mathfrak{h})$ is given by
\begin{align*}
\delta'_\mathrm{rRB}  ((\alpha, \beta, \gamma)) = \big( \delta_{\mathfrak{g}, \mathfrak{h}} (\alpha), \delta^\alpha_{\mathfrak{g}, W} (\beta), \delta_{V, \mathfrak{h}} (\gamma) + h_R (\alpha, \beta)   \big).
\end{align*}
It follows from the above discussions that $\{ C^\bullet_\mathrm{rRB} (V \xrightarrow{R} \mathfrak{g}; W \xrightarrow{S} \mathfrak{h}), \delta'_\mathrm{rRB} \}$ is a cochain complex. The corresponding cohomology groups are called the cohomology of the rRB Leibniz algebra $V \xrightarrow{R} \mathfrak{g}$ with coefficients in the representation $(W \xrightarrow{S} \mathfrak{h}, l, r)$, and they are denoted by $H^\bullet_\mathrm{rRB}   ( V \xrightarrow{R} \mathfrak{g}; W \xrightarrow{S} \mathfrak{h}).$

\begin{remark}
Let $(\mathfrak{g},R)$ be a Rota-Baxter Leibniz algebra and $(V, R_V)$ be a representation. Then we have seen in Remark \ref{rmk-rb} and Proposition \ref{rpr-rb} that $\mathfrak{g} \xrightarrow{R} \mathfrak{g}$ is a rRB Leibniz algebra and $(V \xrightarrow{R_V} V, l_V, r_V)$ is a representation of it. One may define the cohomology of the Rota-Baxter Leibniz algebra $(\mathfrak{g}, R)$ with coefficients in $(V, R_V)$ as the cohomology of the rRB Leibniz algebra $\mathfrak{g} \xrightarrow{R} \mathfrak{g}$ with coefficients in $(V \xrightarrow{R_V} V, l_V, r_V)$.
\end{remark}


\section{Applications of cohomology}\label{sec-6}
In this section, we study deformations and abelian extensions of rRB Leibniz algebras in terms of cohomology. In particular, we show that (i) the equivalence classes of infinitesimal deformations of an rRB Leibniz algebra $V \xrightarrow{R} \mathfrak{g}$ are in one-to-one correspondence with the second cohomology group $H^2_\mathsf{rRB} (V \xrightarrow{R} \mathfrak{g})$ of the rRB Leibniz algebra $V \xrightarrow{R} \mathfrak{g}$ with coefficients in the adjoint representation, and (ii) the isomorphism classes of abelian extensions of $V \xrightarrow{R} \mathfrak{g}$ by a given representation are in one-to-one correspondence with the second cohomology group with coefficients in the representation.

\medskip

\noindent {\bf Deformations of rRB Leibniz algebras.} Let $\mathsf{R}$ be an augmented unital ring with an augmentation $\epsilon : \mathsf{R} \rightarrow {\bf k}$. Note that, replacing vector spaces by modules over $\mathsf{R}$ and linear maps over {\bf k} by $\mathsf{R}$-linear maps in Definitions  \ref{rrbl}  and \ref{rrbl-mor}, one can easily define rRB Leibniz algebras over $\mathsf{R}$ and (iso)morphisms between them. Note that any rRB Leibniz algebra $V \xrightarrow{R} \mathfrak{g}$ can be regarded as a rRB Leibniz algebra over $\mathsf{R}$, where the $\mathsf{R}$-module structures on $\mathfrak{g}$ and $V$ are respectively given by $r \cdot x = \epsilon (r) x$ and $r \cdot v = \epsilon (r) v$, for $r \in \mathsf{R}$, $x \in \mathfrak{g}$ and $v \in V$.

\begin{defn}\label{r-def}
An $\mathsf{R}$-deformation of a rRB Leibniz algebra $V \xrightarrow{R} \mathfrak{g}$ consists of a quadruple $(\mu_\mathsf{R}, l_\mathsf{R}, r_\mathsf{R}, R_\mathsf{R})$ of $\mathsf{R}$-linear maps
\begin{align*}
\mu_\mathsf{R} : (\mathsf{R} \otimes_{\bf k} \mathfrak{g}) \otimes (\mathsf{R} \otimes_{\bf k} \mathfrak{g}) &\rightarrow \mathsf{R} \otimes_{\bf k} \mathfrak{g}, \quad l_\mathsf{R} : (\mathsf{R} \otimes_{\bf k} \mathfrak{g}) \otimes (\mathsf{R} \otimes_{\bf k} V) \rightarrow \mathsf{R} \otimes_{\bf k} V \\
&r_\mathsf{R} : (\mathsf{R} \otimes_{\bf k} V) \otimes (\mathsf{R} \otimes_{\bf k} \mathfrak{g}) \rightarrow \mathsf{R} \otimes_{\bf k} V
\end{align*}
and an $\mathsf{R}$-linear map $R_\mathsf{R} : \mathsf{R} \otimes_{\bf k} V \rightarrow \mathsf{R} \otimes_{\bf k} \mathfrak{g}$ that makes $( \mathsf{R} \otimes_{\bf k} \mathfrak{g}, \mu_\mathsf{R}  )$ into a Leibniz algebra over $\mathsf{R}$, $(\mathsf{R} \otimes_{\bf k} V, l_\mathsf{R}, r_\mathsf{R})$ a representation and $R_\mathsf{R} : \mathsf{R} \otimes_{\bf k} V \rightarrow \mathsf{R} \otimes_{\bf k} \mathfrak{g}$ a relative Rota-Baxter operator. In other words, $\mathsf{R} \otimes_{\bf k} V \xrightarrow{R_\mathsf{R}} \mathsf{R} \otimes_{\bf k} \mathfrak{g}$ is a rRB Leibniz algebra over $\mathsf{R}$.
\end{defn}

\begin{defn}
Let $(\mu_\mathsf{R}, l_\mathsf{R}, r_\mathsf{R}, R_\mathsf{R})$ and $(\mu'_\mathsf{R}, l'_\mathsf{R}, r'_\mathsf{R}, R'_\mathsf{R})$ be two $\mathsf{R}$-deformations of a rRB Leibniz algebra $V \xrightarrow{R} \mathfrak{g}$. They are said to be equivalent if there exists an isomorphism
\begin{align*}
(\Phi, \Psi) : (\mathsf{R} \otimes_{\bf k} V \xrightarrow{R_\mathsf{R}} \mathsf{R} \otimes_{\bf k} \mathfrak{g}) \rightsquigarrow (\mathsf{R} \otimes_{\bf k} V \xrightarrow{R'_\mathsf{R}} \mathsf{R} \otimes_{\bf k} \mathfrak{g})
\end{align*}
of rRB Leibniz algebras over $\mathsf{R}$ satisfying $(\epsilon \otimes_\mathbf{k} \mathrm{id}_\mathfrak{g}) \circ \Phi = (\epsilon \otimes_\mathbf{k} \mathrm{id}_\mathfrak{g}) $ and $(\epsilon \otimes_\mathbf{k} \mathrm{id}_V) \circ \Psi = (\epsilon \otimes_\mathbf{k} \mathrm{id}_V)$.
\end{defn}

In the following, we will be interested in $\mathsf{R}$-deformations of rRB Leibniz algebras, when $\mathsf{R} = \mathbf{k}[[t]]$ (the ring of formal power series) or $\mathsf{R} = \mathbf{k}[[t]]/(t^2)$ (the local Artinian ring of dual numbers).

\begin{defn}
A formal deformation of a rRB Leibniz algebra $V \xrightarrow{R} \mathfrak{g}$ is a $\mathsf{R}$-deformation in the sense of Definition \ref{r-def}, where $\mathsf{R} = \mathbf{k}[[t]]$.
\end{defn}

Thus, a formal deformation of a rRB Leibniz algebra $V \xrightarrow{R} \mathfrak{g}$ consists of a quadruple $(\mu_t, l_t, r_t, R_t)$ of formal sums
\begin{align*}
\mu_t = \sum_{i=0}^\infty \mu_i t^i, \quad l_t = \sum_{i=0}^\infty l_it^i, \quad r_t = \sum_{i=0}^\infty r_it^i ~~~ \text{ and } ~~~ R_t = \sum_{i=0}^\infty R_i t^i,
\end{align*}
(where $\mu_i \in \mathrm{Hom}(\mathfrak{g}^{\otimes 2}, \mathfrak{g})$, $l_i \in \mathrm{Hom}(\mathfrak{g} \otimes V, V)$, $r_i \in \mathrm{Hom}(V \otimes \mathfrak{g}, V)$, $R_i \in \mathrm{Hom}(V, \mathfrak{g})$, for $ i \geq 0$, with $\mu_0 = \mu_\mathfrak{g}$, $l_0 = l_V$, $r_0 = r_V$ and $R_0 = R$) such that $\mathfrak{g}[[t]] = (\mathfrak{g}[[t]], \mu_t)$ is a Leibniz algebra over $\mathbf{k}[[t]]$, $V[[t]] = (V[[t]], l_t, r_t)$ is a representation of the Leibniz algebra $\mathfrak{g}[[t]]$ and $R_t : V[[t]] \rightarrow \mathfrak{g}[[t]]$ is a relative Rota-Baxter operator. In other words, $V[[t]] \xrightarrow{R_t} \mathfrak{g}[[t]]$ is a rRB Leibniz algebra over $\mathbf{k}[[t]]$.

Two formal deformations $(\mu_t, l_t, r_t, R_t)$ and $(\mu'_t, l'_t, r'_t, R'_t)$ are said to be equivalent if there are formal sums
\begin{align*}
\phi_t = \sum_{i=0}^\infty \phi_i t^i, ~~~~ 
\psi_i = \sum_{i=0}^\infty \psi_i t^i ~~~~ \quad (\mathrm{with }~ \phi_i \in \mathrm{Hom}(\mathfrak{g}, \mathfrak{g}), \psi_i \in \mathrm{Hom}(V, V) \text{ with } \phi_0 = \mathrm{id}_\mathfrak{g}, \psi_0 = \mathrm{id}_V)
\end{align*}
such that $(\phi_t, \psi_t) : (V[[t]] \xrightarrow{R_t} \mathfrak{g}[[t]]) \rightsquigarrow  (V[[t]] \xrightarrow{R'_t} \mathfrak{g}[[t]])$ is a morphism of rRB Leibniz algebras  over $\mathbf{k}[[t]]$.

\medskip

Let $(\mu_t, l_t, r_t, R_t)$ be a formal deformation of a rRB Leibniz algebra $V \xrightarrow{R} \mathfrak{g}$. Then for any $x, y, z \in \mathfrak{g}$, $v, v' \in V$ and $n \geq 0$, we have
\begin{align*}
\sum_{i+j = n} \mu_i (x, \mu_j (y, z) ) =~& \sum_{i+j = n} \big( \mu_i (\mu_j (x, y), z) + \mu_i (y, \mu_j (x,z))   \big),\\
\sum_{i+j = n}  l_i (x, l_j (y, v)) =~& \sum_{i+j = n}  \big(  l_i (\mu_j (x,y), v) + l_i (y, l_j (x, v)) \big),\\
\sum_{i+j = n}  l_i (x, r_j (v, y)) =~& \sum_{i+j = n}  \big(  r_i (l_j (x,v), y) + r_i (v, \mu_j (x, y))  \big),\\
\sum_{i+j = n}  r_i (v, \mu_j (x,y)) =~& \sum_{i+j = n}  \big( r_i (r_j (v,x), y) + l_i (x, r_j (v,y))  \big),\\
\sum_{i+j + k = n}  \mu_i (R_j (v), R_k (v')) =~& \sum_{i+j+k = n}  R_i \big( l_j (R_k (v), v') + r_j (v, R_k (v'))   \big).
\end{align*}
Note that all these identities are hold for $n =0$. To summarize all these identities for $n=1$, we first define an element $\beta_1 \in \mathrm{Hom}(\mathfrak{g}^{1,1}, V)$ by
\begin{align}\label{beta1}
\beta_1 (x,v) = l_1 (x,v) ~~~~ \text{ and } ~~~~ \beta_1 (v,x) = r_1 (v,x), ~ \text{ for } x \in \mathfrak{g}, v \in V.
\end{align}
Then for $n=1$, we get
\begin{align*}
\delta_\mathsf{rRB} (\mu_1, \beta_1, R_1) = 0.
\end{align*}
In other words, $(\mu_1, \beta_1, R_1) \in Z^2_\mathsf{rRB} (V \xrightarrow{R} \mathfrak{g})$ is a $2$-cocycle in the cohomology complex of the rRB Leibniz algebra $V \xrightarrow{R} \mathfrak{g}$ with coefficients in the adjoint representation.

Further, if $(\mu_t, l_t, r_t, R_t)$ and $(\mu'_t, l'_t, r'_t, R'_t)$ are two equivalent formal deformations via $(\phi_t, \psi_t)$, then for any $x, y \in \mathfrak{g}$, $v \in V$ and $n \geq 0$, we have
\begin{align*}
\sum_{i+j = n}  \phi_i (\mu_j (x, y)) =~& \sum_{i+j +k = n} \mu'_i (\phi_j (x), \phi_k (y)), \\
\sum_{i+j = n}  \psi_i (l_j (x, v) ) =~& \sum_{i+j+k = n} l'_i (\phi_j (x), \psi_k (v)), \\
\sum_{i+j = n}  \psi_i (r_j (v, x)) =~& \sum_{i+j + k= n} r'_i (\psi_j (v), \phi_k (x)), \\
\sum_{i+j = n} \phi_i \circ R_j =~& \sum_{i+j = n} R'_i \circ \psi_j.
\end{align*}
Combining all these identities for $n =1$, we simply get $(\mu_1, \beta_1, R_1) - (\mu'_1, \beta'_1, R'_1) = \delta_\mathsf{rRB} ((\phi_1 , \psi_1)).$ Therefore, we obtain a map 
\begin{align}\label{formal-def-2co}
(\text{formal deformations of }~ V \xrightarrow{R} \mathfrak{g}) / \sim \quad \rightarrow H^2_\mathrm{rRB} (V \xrightarrow{R} \mathfrak{g}).
\end{align}

\medskip

In the following, we consider a truncated version of formal deformations (called infinitesimal deformations) and generalize the map (\ref{formal-def-2co}) into an isomorphism.

\begin{defn}
An infinitesimal deformation of a rRB Leibniz algebra $V \xrightarrow{R} \mathfrak{g}$ is a $\mathsf{R}$-deformation in the sense of Definition \ref{r-def}, where $\mathsf{R} = \mathbf{k}[[t]]/(t^2)$.
\end{defn}

\begin{thm}\label{inf-def-2}
Let $V \xrightarrow{R} \mathfrak{g}$ be a rRB Leibniz algebra. Then there is a one-to-one correspondence between equivalence classes of infinitesimal deformations of $V \xrightarrow{R} \mathfrak{g}$, and the second cohomology group $H^2_\mathrm{rRB} (V \xrightarrow{R} \mathfrak{g})$.
\end{thm}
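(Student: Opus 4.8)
The plan is to construct an explicit bijection between the set of equivalence classes of infinitesimal deformations of $V \xrightarrow{R} \mathfrak{g}$ and the cohomology group $H^2_\mathrm{rRB}(V \xrightarrow{R} \mathfrak{g})$, using the cocycle and coboundary identities already extracted in the discussion of formal deformations. An infinitesimal deformation is an $\mathsf{R}$-deformation with $\mathsf{R} = \mathbf{k}[[t]]/(t^2)$, so it is determined by a quadruple $(\mu_t, l_t, r_t, R_t)$ with $\mu_t = \mu_\mathfrak{g} + \mu_1 t$, $l_t = l_V + l_1 t$, $r_t = r_V + r_1 t$ and $R_t = R + R_1 t$, where all products are taken modulo $t^2$. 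Setting $\beta_1 \in \mathrm{Hom}(\mathfrak{g}^{1,1}, V)$ as in (\ref{beta1}), I would define the map by sending such a deformation to $(\mu_1, \beta_1, R_1)$.

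First I would observe that, because we work modulo $t^2$, the only deformation equations carrying information are those at $n = 0$ and $n = 1$ among the five families of identities listed before this theorem. The $n = 0$ equations hold automatically, since they merely reassert that $V \xrightarrow{R} \mathfrak{g}$ is itself a rRB Leibniz algebra. The $n = 1$ equations are exactly the component identities whose sum was shown above to be equivalent to $\delta_\mathsf{rRB}(\mu_1, \beta_1, R_1) = 0$. Hence $(\mu_1, \beta_1, R_1) \in Z^2_\mathsf{rRB}(V \xrightarrow{R} \mathfrak{g})$. For the passage to equivalence classes I would invoke the computation, again already recorded for the formal case, that two equivalent infinitesimal deformations, related via $(\phi_t, \psi_t) = (\mathrm{id}_\mathfrak{g} + \phi_1 t, \mathrm{id}_V + \psi_1 t)$, satisfy $(\mu_1, \beta_1, R_1) - (\mu_1', \beta_1', R_1') = \delta_\mathsf{rRB}((\phi_1, \psi_1))$. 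This shows the assignment descends to a well-defined map on equivalence classes landing in $H^2_\mathrm{rRB}(V \xrightarrow{R} \mathfrak{g})$.

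It then remains to verify bijectivity. Surjectivity follows by reversing the first observation: given any cocycle $(\mu_1, \beta_1, R_1) \in Z^2_\mathsf{rRB}(V \xrightarrow{R} \mathfrak{g})$, I would set $\mu_t = \mu_\mathfrak{g} + \mu_1 t$, $l_t(x,v) = l_V(x,v) + \beta_1(x,v)\, t$, $r_t(v,x) = r_V(v,x) + \beta_1(v,x)\, t$ and $R_t = R + R_1 t$; the cocycle condition $\delta_\mathsf{rRB}(\mu_1, \beta_1, R_1) = 0$ is precisely the collection of $n = 1$ deformation equations, so this quadruple is an infinitesimal deformation mapping to the given class. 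For injectivity, if two infinitesimal deformations produce cohomologous cocycles, say $(\mu_1, \beta_1, R_1) - (\mu_1', \beta_1', R_1') = \delta_\mathsf{rRB}((\phi_1, \psi_1))$ for some $(\phi_1, \psi_1) \in C^1_\mathsf{rRB}(V \xrightarrow{R} \mathfrak{g})$, I would run the coboundary computation in reverse to check that $(\phi_t, \psi_t) = (\mathrm{id}_\mathfrak{g} + \phi_1 t, \mathrm{id}_V + \psi_1 t)$ is an isomorphism of rRB Leibniz algebras over $\mathbf{k}[[t]]/(t^2)$ fixing the augmentation, whence the two deformations are equivalent.

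The main point to keep track of, and the only place where care is genuinely needed, is the bookkeeping that the single identity $\delta_\mathsf{rRB}(\mu_1, \beta_1, R_1) = 0$ simultaneously encodes all five families of order-$t$ equations: the Leibniz deformation equation, the three representation-compatibility relations (packaged together through the one component $\beta_1$), and the relative Rota-Baxter relation. Once this identification is granted, every remaining step is merely the restriction to order $t$ of the formal-deformation calculation already carried out above, so no new computation is required.
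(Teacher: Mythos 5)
Your proposal is correct and follows essentially the same route as the paper: both send an infinitesimal deformation to the $2$-cocycle $(\mu_1,\beta_1,R_1)$ built via (\ref{beta1}), use the order-$t$ equivalence computation to get well-definedness on classes, and invert by reading the cocycle (resp.\ coboundary) condition as the $n=1$ deformation (resp.\ equivalence) equations. The only cosmetic difference is organizational: you phrase the conclusion as surjectivity plus injectivity of a single map, while the paper constructs two maps $\Theta_1$, $\Theta_2$ and observes they are mutually inverse — the underlying verifications are identical.
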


\begin{proof}
Let $(\mu_t = \mu_\mathfrak{g} + t \mu_1, l_t = l_V + t l_1, r_t = r_V + t r_1, R_t = R + t R_1)$ be an infinitesimal deformation of the rRB Leibniz algebra $V \xrightarrow{R} \mathfrak{g}$. Then similar to formal deformations, one can show that $(\mu_1, \beta_1, R_1) \in Z^2_\mathrm{rRB} (V \xrightarrow{R} \mathfrak{g})$ is a $2$-cocycle, where $\beta_1$ is given by (\ref{beta1}). Similarly, if two infinitesimal deformations are equivalent, then the corresponding $2$-cocycles are differ by a coboundary. Therefore, there is a well-defined map
\begin{align*}
\Theta_1 : (\text{infinitesimal deformations of }~ V \xrightarrow{R} \mathfrak{g}) / \sim \quad \rightarrow H^2_\mathrm{rRB} (V \xrightarrow{R} \mathfrak{g}).
\end{align*}

Conversely, let $(\mu_1, \beta_1, R_1) \in Z^2_\mathrm{rRB}(V \xrightarrow{R} \mathfrak{g})$ be a $2$-cocycle. Then it is easy to verify that the quadruple $(\mu_t = \mu_\mathfrak{g} + t \mu_1, l_t = l_V + t l_1, r_t = r_V + t r_1, R_t = R + t R_1)$ is an infinitesimal deformation of the rRB Leibniz algebra $V \xrightarrow{R} \mathfrak{g}$, where $l_1, r_1$ are defined from $\beta_1$ simply by (\ref{beta1}). Let $(\mu'_1, \beta'_1, R'_1) \in Z^2_\mathrm{rRB}(V \xrightarrow{R} \mathfrak{g})$ be another $2$-cocycle cohomologous to $(\mu_1, \beta_1, R_1)$, say $(\mu_1, \beta_1, R_1) - (\mu'_1, \beta'_1, R'_1) = \delta_\mathrm{rRB} ((\phi_1 , \psi_1))$. Then the corresponding infinitesimal deformations $(\mu_t, l_t, r_t, R_t)$ and $(\mu'_t, l'_t, r'_t, R'_t)$ are equivalent via $(\phi_t = \mathrm{id}_\mathfrak{g} + t \phi_1, \psi_t = \mathrm{id}_V + t \psi_1)$. Hence there is a well-defined map  $\Theta_2 : H^2_\mathrm{rRB} (V \xrightarrow{R} \mathfrak{g}) \rightarrow (\text{infinitesimal deformations of }~ V \xrightarrow{R} \mathfrak{g}) / \sim$. Finally, the maps $\Theta_1$ and $\Theta_2$ are inverses to each other.
\end{proof}

\medskip

\medskip

\noindent {\bf Abelian extensions of rRB Leibniz algebras.} Let $V \xrightarrow{R} \mathfrak{g}$ be a rRB Leibniz algebra and $W \xrightarrow{S} \mathfrak{h}$ be a $2$-term chain complex (not necessarily a representation). Note that $W \xrightarrow{S} \mathfrak{h}$ can be regarded as a rRB Leibniz algebra with the trivial Leibniz bracket on $\mathfrak{h}$ and the trivial $\mathfrak{h}$-representation on $W$.

\begin{defn}
An {\bf abelian extension} of $V \xrightarrow{R} \mathfrak{g}$ by the $2$-term chain complex $W \xrightarrow{S} \mathfrak{h}$ is a short exact sequence of rRB Leibniz algebras
\begin{align}\label{abelian-diag}
\xymatrix{
o \ar[r] & W \ar[r]^{\overline{i}} \ar[d]_S & \widehat{V} \ar[r]^{\overline{p}} \ar[d]^{\widehat{R}} & V \ar[r] \ar[d]^R & 0 \\
0 \ar[r] & \mathfrak{h} \ar[r]_i & \widehat{\mathfrak{g}}  \ar[r]_p & \mathfrak{g} \ar[r] & 0.
}
\end{align}
We denote an abelian extension as above simply by $\widehat{V} \xrightarrow{\widehat{R}} \widehat{\mathfrak{g}}$ when the structure maps in the diagram (\ref{abelian-diag}) are understood.
\end{defn}

A section of the abelian extension (\ref{abelian-diag}) is a pair $(s, \overline{s})$ of linear maps $s : \mathfrak{g} \rightarrow \widehat{\mathfrak{g}}$ and $\overline{s} : V \rightarrow \widehat{V}$ satisfying $p \circ s = \mathrm{id}_\mathfrak{g}$ and $\overline{p} \circ \overline{s} = \mathrm{id}_V$. A section of (\ref{abelian-diag}) always exists.

Let $(s, \overline{s})$ be a section of (\ref{abelian-diag}). We define linear maps $l_\mathfrak{h} : \mathfrak{g} \otimes \mathfrak{h} \rightarrow \mathfrak{h}$ and $r_\mathfrak{h} : \mathfrak{h} \otimes \mathfrak{g} \rightarrow \mathfrak{h}$ by
\begin{align*}
l_\mathfrak{h} (x,h) = [s(x), i(h)]_{\widehat{\mathfrak{g}}}   ~~~~ \text{ and } ~~~~ r_\mathfrak{h} (h, x) = [i(h), s(x) ]_{\widehat{\mathfrak{g}}}, ~\text{ for } x \in \mathfrak{g}, h \in \mathfrak{h}.
\end{align*}
It is easy to verify that the above two maps defines a representation of the Leibniz algebra $\mathfrak{g}$ on the vector space $\mathfrak{h}$. Similarly, the vector space $W$ can be equipped with a representation of the Leibniz algebra $\mathfrak{g}$ via $l_W : \mathfrak{g} \otimes W \rightarrow W$ and $r_W : W \otimes \mathfrak{g} \rightarrow W$ given by
\begin{align*}
l_W (x,w) = l_{\widehat{V}} (s(x), \overline{i}(w)) ~~~ \text{ and } ~~~ r_W (w, x) = r_{\widehat{V}} (\overline{i}(w), s(x)), ~ \text{ for } x \in \mathfrak{g}, w \in W.
\end{align*} 
Moreover, we define linear maps $l : V \otimes \mathfrak{h} \rightarrow W$ and $r : \mathfrak{h} \otimes V \rightarrow W$ by
\begin{align*}
l (v, h) = l_{\widehat{V}} (\overline{s}(v), i(h)) ~~~~ \text{ and } ~~~~ r (h, v) = r_{\widehat{ V}} (i(h), \overline{s}(v)), \text{ for } v \in V, h \in \mathfrak{h}.
\end{align*}
It is easy to verify that the maps $l, r$ satisfy the identities of (\ref{rep-1s}) and (\ref{rep-2s}). For any $v \in V$ and $w \in W$, we also have
\begin{align*}
l_\mathfrak{h} (R(v), S(w)) = [sR(v), iS (w)]_{\widehat{\mathfrak{g}}} =~& [\widehat{R}(\overline{s}(v)), \widehat{R}(\overline{i}(w)) ]_{\widehat{\mathfrak{g}}}\\
=~& \widehat{R} \big( l_{\widehat{V}} (   \widehat{R} (\overline{s} (v)), \overline{i}(w)) + r_{\widehat{V}} ( \overline{s}(v), \widehat{R} (\overline{i}(w)))   \big)\\
=~& \widehat{R} \big( l_{\widehat{V}} (   sR(v), \overline{i}(w)) + r_{\widehat{V}} ( \overline{s}(v), iS(w) )   \big) \\
=~& S \big( l_W (R(v), w) + l (v, S (w))   \big).
\end{align*}
Similarly, we can show that $r_\mathfrak{h} (S(w), R(v)) = S \big( r (S (w), v) + r_W (w, R(v) )  \big).$ Therefore, an abelian extension (\ref{abelian-diag}) induces a representation $(W \xrightarrow{S} \mathfrak{h}, l, r)$ of the rRB Leibniz algebra $V \xrightarrow{R} \mathfrak{g}$.

Let $(s', \overline{s}')$ be any other section of the abelian extension (\ref{abelian-diag}). Then we have $s(x) - s'(x) \in \mathrm{ker} (p) = \mathrm{im}(i)$ and $\overline{s}(v) - \overline{s}'(v) \in \mathrm{ker} (\overline{p}) = \mathrm{im}(\overline{i}),$ for $x \in \mathfrak{g}$ and $v \in V$. Hence we have
\begin{align*}
& l_\mathfrak{h} (x, h) - l'_\mathfrak{h} (x, h) = [s(x) - s'(x), i(h)]_{\widehat{\mathfrak{g}}} = 0 ~~~ \text{ and } ~~~ r_\mathfrak{h} (h, x) - r'_\mathfrak{h} (h, x) = [i(h), s(x) - s'(x)]_{\widehat{\mathfrak{g}}} = 0,\\
 & l_W (x, w) - l'_W (x, w) = l_{\widehat{V}} (s(x) - s'(x) , \overline{i}(w)) = 0 ~~~ \text{ and } ~~~ r_W (w, x) - r'_W (w, x) = r_{\widehat{V}} (\overline{i} (w), s(x) - s'(x)) = 0,\\
 & l(v, h) - l' (v, h) = l_{\widehat{V}} (\overline{s}(v) - \overline{s}' (x) , i(h)) = 0 ~~~ \text{ and } ~~~ r (h, v) - r'(h, v) = r_{\widehat{V}} (i(h), \overline{s} (v) - \overline{s}'(v)) = 0.
\end{align*}
\big(Here $(l'_\mathrm{h}, r'_\mathfrak{h})$, $(l'_W, r'_W)$ and $(l', r')$ denote the structures induced by the section $(s', \overline{s}')$\big). This shows that the structure of the representation $(W \xrightarrow{S} \mathfrak{h}, l, r)$ is independent of the choice of a section of (\ref{abelian-diag}).

\begin{defn}\label{defn-iso-abel}
Let $V \xrightarrow{R} \mathfrak{g}$ be a rRB Leibniz algebra and $W \xrightarrow{S} \mathfrak{h}$ be a $2$-term chain complex. Two abelian extensions $\widehat{V} \xrightarrow{\widehat{R}} \widehat{\mathfrak{g}}$ and $\widehat{V}' \xrightarrow{\widehat{R}'} \widehat{\mathfrak{g}}'$ are said to be {\bf isomorphic} if there is an isomorphism $(\phi, \psi) : (\widehat{V} \xrightarrow{\widehat{R}} \widehat{\mathfrak{g}}) \rightsquigarrow (\widehat{V}' \xrightarrow{\widehat{R}'} \widehat{\mathfrak{g}}')$ of rRB Leibniz algebras making the following diagram commutative
\begin{align}
\xymatrixrowsep{0.36cm}
\xymatrixcolsep{0.36cm}
\xymatrix{
0 \ar[rr] &  & W \ar[rr] \ar[dd] \ar@{=}[rd] & & \widehat{V} \ar[rr] \ar[rd]^\psi \ar[dd] & & V \ar[dd] \ar[rr] \ar@{=}[rd] & & 0 \\
 & 0 \ar[rr] & & W \ar[rr] \ar[dd] & & \widehat{V}' \ar[rr] \ar[dd] & & V \ar[rr] \ar[dd] & & 0 \\
0 \ar[rr] &  & \mathfrak{h} \ar[rr] \ar@{=}[rd] & & \widehat{\mathfrak{g}} \ar[rr] \ar[rd]^\phi & & \mathfrak{g} \ar[rr] \ar@{=}[rd] & & 0 \\
 & 0 \ar[rr] & & \mathfrak{h} \ar[rr] & & \widehat{\mathfrak{g}}' \ar[rr] & & \mathfrak{g} \ar[rr] & & 0. \\
}
\end{align}
\end{defn}

Let $V \xrightarrow{R} \mathfrak{g}$ be a rRB Leibniz algebra and $(W \xrightarrow{S} \mathfrak{h}, l, r)$ be a representation of it (Definition \ref{repn-defn}). Let $\mathrm{Ext}(V \xrightarrow{R} \mathfrak{g}; W \xrightarrow{S} \mathfrak{h})$ denote the set of isomorphism classes of abelian extensions of $V \xrightarrow{R} \mathfrak{g}$ by the $2$-term chain complex $W \xrightarrow{S} \mathfrak{h}$ so that the induced representation coincides with the prescribed one. Then we have the following.

\begin{thm}\label{abelian-ext-2}
There is a one-to-one correspondence between $\mathrm{Ext}(V \xrightarrow{R} \mathfrak{g}; W \xrightarrow{S} \mathfrak{h})$ and the second cohomology group $H^2_\mathrm{rRB} (V \xrightarrow{R} \mathfrak{g}; W \xrightarrow{S} \mathfrak{h}).$
\end{thm}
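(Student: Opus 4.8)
The plan is to realize both sides as the set of twisting data that turn the split extension into a genuine one, and to match the cocycle condition with the rRB Leibniz axioms. First I would construct the map from abelian extensions to cohomology. Fix an abelian extension $\widehat{V}\xrightarrow{\widehat{R}}\widehat{\mathfrak{g}}$ together with a section $(s,\overline{s})$, so that as vector spaces $\widehat{\mathfrak{g}}\cong\mathfrak{g}\oplus\mathfrak{h}$ and $\widehat{V}\cong V\oplus W$; the induced representation $(W\xrightarrow{S}\mathfrak{h},l,r)$ has already been shown to be section-independent before the statement. I would then record the failure of $(s,\overline{s})$ to be a morphism as a triple $(\alpha,\beta,\gamma)$ with $\alpha(x,y)=[s(x),s(y)]_{\widehat{\mathfrak{g}}}-s([x,y]_\mathfrak{g})\in\mathfrak{h}$, with $\beta(x,v)=l_{\widehat{V}}(s(x),\overline{s}(v))-\overline{s}(l_V(x,v))$ and $\beta(v,x)=r_{\widehat{V}}(\overline{s}(v),s(x))-\overline{s}(r_V(v,x))$ both in $W$, and with $\gamma(v)=\widehat{R}(\overline{s}(v))-s(R(v))\in\mathfrak{h}$; these are well defined because $p$ and $\overline{p}$ kill the right-hand sides, so $(\alpha,\beta,\gamma)\in C^2_\mathrm{rRB}(V\xrightarrow{R}\mathfrak{g};W\xrightarrow{S}\mathfrak{h})$.

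Next I would verify that $(\alpha,\beta,\gamma)$ is a $2$-cocycle. Expanding the Leibniz identity for $\widehat{\mathfrak{g}}$ on images of $s$ and collecting the $\mathfrak{h}$-components yields $\delta_{\mathfrak{g},\mathfrak{h}}(\alpha)=0$; doing the same with the three representation identities of $\widehat{V}$ gives $\delta^\alpha_{\mathfrak{g},W}(\beta)=0$; and expanding the relative Rota-Baxter identity $[\widehat{R}\,\overline{s}(v),\widehat{R}\,\overline{s}(v')]_{\widehat{\mathfrak{g}}}=\widehat{R}(\cdots)$ and comparing $\mathfrak{h}$-components produces exactly $\delta_{V,\mathfrak{h}}(\gamma)+h_R(\alpha,\beta)=0$, the cross term $h_R(\alpha,\beta)$ arising when one replaces $\widehat{R}\,\overline{s}$ by $sR+\gamma$ and $[s(\cdot),s(\cdot)]$ by $s[\cdot,\cdot]+\alpha$. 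Hence $\delta'_\mathrm{rRB}((\alpha,\beta,\gamma))=0$. A change of section replaces $(s,\overline{s})$ by $(s+i\circ\phi_1,\overline{s}+\overline{i}\circ\psi_1)$ for some $\phi_1:\mathfrak{g}\to\mathfrak{h}$ and $\psi_1:V\to W$, and a direct computation shows the cocycle changes by $\delta'_\mathrm{rRB}((\phi_1,\psi_1))$; isomorphic extensions are handled in the same way. This produces a well-defined map $\mathrm{Ext}(V\xrightarrow{R}\mathfrak{g};W\xrightarrow{S}\mathfrak{h})\to H^2_\mathrm{rRB}(V\xrightarrow{R}\mathfrak{g};W\xrightarrow{S}\mathfrak{h})$.

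For the inverse, given a $2$-cocycle $(\alpha,\beta,\gamma)$ I would put a rRB Leibniz structure on $\mathfrak{g}\oplus\mathfrak{h}$ and $V\oplus W$ by twisting the semidirect product of Theorem \ref{thm-semi}: take the bracket $[(x,h),(y,k)]=([x,y]_\mathfrak{g},\,l_\mathfrak{h}(x,k)+r_\mathfrak{h}(h,y)+\alpha(x,y))$, the left and right actions twisted by $\beta$, and the operator $\widehat{R}(v,w)=(R(v),S(w)+\gamma(v))$. The three components of the cocycle equation are precisely the Leibniz identity, the representation identities, and the relative Rota-Baxter identity for this structure, so $V\oplus W\xrightarrow{\widehat{R}}\mathfrak{g}\oplus\mathfrak{h}$ is a rRB Leibniz algebra sitting in an abelian extension of $V\xrightarrow{R}\mathfrak{g}$ by $W\xrightarrow{S}\mathfrak{h}$ whose induced representation is the prescribed one. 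Cohomologous cocycles, differing by $\delta'_\mathrm{rRB}((\phi_1,\psi_1))$, yield isomorphic extensions through the isomorphism built from $(\phi_1,\psi_1)$, and the two assignments are mutually inverse by construction, which gives the bijection.

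The main obstacle is the relative Rota-Baxter bookkeeping in the third component: showing that $\delta_{V,\mathfrak{h}}(\gamma)+h_R(\alpha,\beta)=0$ is exactly the operator identity for $\widehat{R}$. This is where the representation of the induced Leibniz algebra $V_R$ on $\mathfrak{h}$ from Proposition \ref{vr-h} enters, since $\delta_{V,\mathfrak{h}}$ is its coboundary, and one must track how the twist $\gamma$ interacts simultaneously with $\alpha$ through the bracket on $\widehat{\mathfrak{g}}$ and with $\beta$ through the lifted action, which is precisely the content of the cross term $h_R(\alpha,\beta)$. The remaining two components are routine Leibniz-cohomology computations.
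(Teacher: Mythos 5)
Your proposal is correct and follows essentially the same route as the paper: the same cocycle $(\alpha,\beta,\gamma)$ extracted from a section, the same well-definedness arguments (section-independence and invariance under isomorphism of extensions), the same inverse construction via the $\alpha$-, $\beta$-, $\gamma$-twisted semidirect product, and the same isomorphism built from a $1$-cochain $(\kappa,\eta)$ for cohomologous cocycles. The only difference is that you sketch the cocycle verification explicitly, whereas the paper delegates it to the analogous computation in the relative Rota-Baxter Lie algebra setting of Jiang--Sheng.
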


\begin{proof}
Let (\ref{abelian-diag}) be an abelian extension of the rRB Leibniz algebra $V \xrightarrow{R} \mathfrak{g}$ by the $2$-term chain complex $W \xrightarrow{S} \mathfrak{h}$. For any section $(s, \overline{s})$, we define maps
\begin{align*}
&\alpha \in \mathrm{Hom} (\mathfrak{g}^{\otimes 2}, \mathfrak{h}), ~~~~ \alpha (x, y) = [s(x), s(y)]_{\widehat{\mathfrak{g}}} - s ([x, y]_\mathfrak{g}), \\
&\beta \in \mathrm{Hom} (\mathfrak{g}^{1,1}, W), ~~~~ \begin{cases}  \beta (x, v) = l_{\widehat{V}} (s(x), \overline{s}(v)) - \overline{s} (l_V (x, v)),\\
\beta (v,x) = r_{\widehat{V}} (\overline{s}(v), s(x)) - \overline{s} (r_V (v,x)),  
\end{cases}\\
&\gamma \in \mathrm{Hom}(V, \mathfrak{h}), ~~~~ \gamma (v) = \widehat{R} (\overline{s}(v)) - s (R(v)).
\end{align*}
Then similar to \cite{jiang-sheng}, one can show that the triple $(\alpha, \beta, \gamma)$ is a $2$-cocycle in $Z^2_{\mathrm{rRB}} (V \xrightarrow{R} \mathfrak{g}; W \xrightarrow{S} \mathfrak{h})$. Moreover, the corresponding cohomology class does not depend on the choice of the section.

Let $\widehat{V} \xrightarrow{\widehat{R}} \widehat{\mathfrak{g}}$ and  $\widehat{V}' \xrightarrow{\widehat{R}'} \widehat{\mathfrak{g}}'$ be two isomorphic abelian extensions as of Definition \ref{defn-iso-abel}. If $(s, \overline{s})$ is a section of the first abelian extension, then we have
$p' \circ (\phi \circ s) = p \circ s = \mathrm{id}_\mathfrak{g}$ and $\overline{p}' \circ (\psi \circ \overline{s}) = \overline{p} \circ \overline{s} = \mathrm{id}_V$. This shows that $(\phi \circ s, \psi \circ \overline{s})$ is a section of the second abelian extension. Let $(\alpha', \beta', \gamma')$ be the $2$-cocycle in $Z^2_{\mathrm{rRB}} (V \xrightarrow{R} \mathfrak{g}; W \xrightarrow{S} \mathfrak{h})$ corresponding to the second abelian extension $\widehat{V}' \xrightarrow{\widehat{R}'} \widehat{\mathfrak{g}}'$ and its section $(\phi \circ s, \psi \circ \overline{s})$. Then we have
\begin{align*}
\alpha' (x, y) =~& [ \phi \circ s (x), \phi \circ s (y)]_{\widehat{\mathfrak{g}}'} - (\phi \circ s) [x,y]_\mathfrak{g} \\
=~& \phi \big( [s(x), s(y)]_{\widehat{\mathfrak{g}}} - s[x,y]_\mathfrak{g}  \big) = \phi (\alpha (x,y)) = \alpha (x, y)  \quad (\because \phi|_\mathfrak{h} = \mathrm{id}_\mathfrak{h}).
\end{align*}
By similar observations, one can show that $ \beta' = \beta$ and $\gamma' = \gamma$. Hence $(\alpha' , \beta', \gamma') = (\alpha, \beta, \gamma)$. Therefore, there is a well-defined map 
\begin{align*}
\Theta_1 : \mathrm{Ext} (V \xrightarrow{R} \mathfrak{g}; W \xrightarrow{S} \mathfrak{h}) \rightarrow H^2_\mathrm{rRB} (V \xrightarrow{R} \mathfrak{g}; W \xrightarrow{S} \mathfrak{h}).
\end{align*}

\medskip

Conversely, let $(\alpha, \beta, \gamma) \in Z^2_{\mathrm{rRB}} (V \xrightarrow{R} \mathfrak{g}; W \xrightarrow{S} \mathfrak{h})$ be a $2$-cocycle. Take $\widehat{\mathfrak{g}} = \mathfrak{g} \oplus \mathfrak{h}$ and $\widehat{V} = V \oplus W$, and define some operations 
\begin{align*}
[(x, h), (y, k)]_{\widehat{\mathfrak{g}}} : = ([x,y]_\mathfrak{g}, ~ l_\mathfrak{h} (x, k) + r_\mathfrak{h} (h, y) + \alpha (x, y) ), \\
l_{\widehat{V}} ((x, h), (v, w)) := (l_V (x, v), ~ l_W (x, w) + r (h, v) + \beta (x, v) ),\\
r_{\widehat{V}} ((v, w), (x,h)) := (r_V (v, x),~ l (v, h) + r_W (w, x) + \beta (v, x) ),
\end{align*}
for $(x, h), (y, k) \in \widehat{\mathfrak{g}}$ and $(v, w) \in \widehat{V}$. It is easy to verify that $(\widehat{\mathfrak{g}}, [~,~]_{\widehat{\mathfrak{g}}})$ is a Leibniz algebra and $(\widehat{V}, l_{\widehat{V}}, r_{\widehat{V}})$ is a representation of it. We also define a map $\widehat{R}: \widehat{V} \rightarrow \widehat{\mathfrak{g}}$ by
\begin{align*}
\widehat{R}((v, w)) = (R(v), S (w) + \gamma (v)), ~ \text{ for } (v, w) \in \widehat{V}.
\end{align*}
Since $\gamma$ satisfies $\delta_{V, \mathfrak{h}} (\gamma) + h_R (\alpha, \beta) = 0$, we have that $\widehat{R}$ is a relative Rota-Baxter operator. In other words, $\widehat{V} \xrightarrow{\widehat{R}} \widehat{\mathfrak{g}}$ is a rRB Leibniz algebra. Moreover, this is an abelian extension of the rRB Leibniz algebra $V \xrightarrow{R} \mathfrak{g}$ by the $2$-term chain complex $W \xrightarrow{S} \mathfrak{h}$.

Let $(\alpha, \beta, \gamma)$ and $(\alpha', \beta', \gamma')$ be two cohomologous $2$-cocycles, say $(\alpha, \beta, \gamma) - (\alpha', \beta', \gamma') = \delta'_\mathrm{rRB} ((\kappa, \eta))$, for some $(\kappa, \eta) \in C^1_\mathrm{rRB} (V \xrightarrow{R} \mathfrak{g}; W \xrightarrow{S} \mathfrak{h})$. We define maps $\phi : \mathfrak{g} \oplus \mathfrak{h} \rightarrow \mathfrak{g} \oplus \mathfrak{h}$ and $\psi : V \oplus W \rightarrow V \oplus W$ by
\begin{align*}
\phi ((x, h)) = (x, h + \kappa (x) ) ~~~~ \text{ and } ~~~~ \psi ((v, w)) = (v, w + \eta (v) ).
\end{align*}
It is easy to see that $(\phi, \psi) : (\widehat{V} \xrightarrow{\widehat{R}} \widehat{\mathfrak{g}}) \rightsquigarrow (\widehat{V}' \xrightarrow{\widehat{R}'} \widehat{\mathfrak{g}}')$ is an isomorphism of abelian extensions. Hence there is a well-defined map
\begin{align*}
\Theta_2 : H^2_\mathrm{rRB} (V \xrightarrow{R} \mathfrak{g} ; W \xrightarrow{S} \mathfrak{h}) \rightarrow \mathrm{Ext}(V \xrightarrow{R} \mathfrak{g} ; W \xrightarrow{S} \mathfrak{h}).
\end{align*}
Finally, the maps $\Theta_1$ and $\Theta_2$ are inverses to each other. Hence the proof.
\end{proof}

\medskip

\noindent {\bf Further discussions.} Strongly homotopy Lie algebras ($L_\infty$-algebras) are a generalization of Lie algebras where the Jacobi identity holds up to certain homotopy. Other homotopy algebras, such as strongly homotopy associative algebras ($A_\infty$-algebras) and strongly homotopy Leibniz algebras ($Leibniz_\infty$-algebras) are also widely studied in the literature as they have close connections with topology, combinatorics, higher geometry and mathematical physics \cite{stas,nobert}. Representations of all these homotopy algebras are also well-known \cite{keller,lada-markl}. The notion of homotopy relative Rota-Baxter operators in the context of $L_\infty$-algebras (resp. $A_\infty$-algebras) are recently defined in \cite{laza-rota,DasSK}. Such operators are characterized by Maurer-Cartan elements in a suitable $L_\infty$-algebra constructed in the above papers. A triple consisting of an $L_\infty$-algebra (resp. $A_\infty$-algebra), a representation and a homotopy relative Rota-Baxter operator is called a homotopy relative Rota-Baxter Lie (resp. associative) algebra. In \cite{jiang-sheng} Jiang and Sheng showed that homotopy relative Rota-Baxter Lie algebras are closely related to the cohomology of relative Rota-Baxter Lie algebras introduced in \cite{laza-rota}.

In a forthcoming paper, we aim to define homotopy relative Rota-Baxter Leibniz algebras (homotopy rRB Leibniz algebras in short) by a similar construction given in \cite{laza-rota,DasSK}. It would be interesting to find the relation between homotopy rRB Leibniz algebras and the cohomology of rRB Leibniz algebras introduced in the present paper.

\medskip

An operad is a device that encodes a type of algebras \cite{lod-val-book}. Instead of studying the properties of a particular algebra, an operad helps to understand the universal operations that can be performed on the elements of any algebra of a given type. All classical algebras (such as Lie, associative, Leibniz, Poisson, pre-Lie etc.) can be described by suitable operads. In \cite{bala} Balavoine studied cohomology and deformation theory of $\mathcal{P}$-algebras, where $\mathcal{P}$ is any binary quadratic operad. The notion of Rota-Baxter operators and relative Rota-Baxter operators on $\mathcal{P}$-algebras are introduced, and their relation with the splitting of $\mathcal{P}$-algebras are obtained in \cite{bai-splitting}. Such notions generalize the standard relations among Rota-Baxter operators and dendriform algebras \cite{aguiar-pre}. In a future project, we study relative Rota-Baxter $\mathcal{P}$-algebras and develop their representations, cohomology, deformations and homotopy theory. We also expect that the operad governing homotopy Rota-Baxter $\mathcal{P}$-algebras is the minimal model of the operad of Rota-Baxter $\mathcal{P}$-algebras.


\medskip

\noindent {\bf Acknowledgements.} The author would like to thank the esteemed referees for their useful comments on the earlier version of the manuscript. Some parts of the work was carried out when the author was a postdoctoral fellow at IIT Kanpur. The author also thanks IIT Kharagpur for the beautiful atmosphere where the final parts of the paper has been done.

\end{document}